\newtheorem{theorem}{Theorem}
\newtheorem{definition}[theorem]{Definition}
\newtheorem{lemma}[theorem]{Lemma}
\newtheorem{corollary}[theorem]{Corollary}
\title{Evacuation of rectangular standard Young tableaux corresponds to reflection of $\mathfrak{sl}_n$ webs}
\author{Lucas Adams Cowan \and Ronja Eilfort \and  Kerry Seekamp \and Julianna Tymoczko}
\begin{document}
\begin{abstract}
 Web graphs form a family of planar directed graphs with boundary that can be used to model quantum $\mathfrak{sl}_n$-invariant vectors.  Standard Young tableaux on an $n \times k$ rectangle naturally index a basis for $\mathfrak{sl}_n$ web graphs.  We prove that evacuation of the tableau $T$ corresponds to reflection of the associated web graph $w_T$ up to equivalence under a specific set of edge-flip relations.  This extends a result of Patrias and Pechenik for the cases $n=2,3$ and mirrors analogous results about rotation of web graphs corresponding to promotion of tableau by Peterson-Pylyavskyy-Rhoades for $n=3$ and Gaetz-Pechenik-Pfannerer-Striker-Swanson for $n=4$.  We use an intermediate object called a \textit{multicolored noncrossing matching}, which is closely related to the notion of strandings recently introduced by Russell and the fourth author.
\end{abstract}

\maketitle

\section{Introduction}

Tableaux are well-studied objects that visually and intuitively index algebraic structures, facilitating elegant correspondences within algebra and combinatorics in fields like representation theory, Schubert calculus, and symmetric function theory. Operations in these other fields are encoded by combinatorial procedures on tableaux, e.g. Sch\"{u}tzenberger's promotion  \cite{SCHUTZENBERGER1963, SCHUTZENBERGER197273}.  We study evacuation of tableaux, which is a kind of repeated promotion that is arguably even more important than promotion itself \cite{EdeGre87, Hai92, Ste96, Sta09}. 

Our main result demonstrates that evacuation of rectangular standard Young tableaux is equivalent to reflection of a type of graph called \emph{web graphs}. Broadly speaking, $\mathfrak{sl}_n$ web graphs are  directed, edge-labeled, trivalent plane graphs with boundary that satisfy a certain conservation of flow mod $n$ at each interior vertex.   Web graphs describe the morphisms in a diagrammatic category for  $U_q(\mathfrak{sl}_n)$-representations; equivalently, web graphs give a concrete recipe to construct $U_q(\mathfrak{sl}_n)$-invariant vectors. Webs arise in a variety of mathematical contexts, including combinatorics, algebraic geometry, representation theory, knot theory, cluster algebras, mathematical physics, and categorification. They are known to be closely connected to the combinatorics of tableaux: for instance, rotation of $\mathfrak{sl}_n$ webs intertwines with promotion of rectangular standard Young tableaux for $n=2, 3$ \cite{PPR09, Tymoczko2012}, extended recently by Gaetz et alia to $\mathfrak{sl}_4$ webs \cite{GPPPS}.

The work in this paper on evacuation and reflection streamlines and generalizes an earlier result of Patrias-Pechenik, which proved a similar result in the case of $\mathfrak{sl}_n$ webs when $n=2$ and $n=3$ in terms of an involution described using careful planar geometry~\cite{PatriasPechenik2023}.   Our work relies deeply on Fontaine's model for $\mathfrak{sl}_n$ web graphs \cite{Fontaine2012}, for which a complete set of relations was only recently proven \cite{RT2025}. The strategy of our proof is to build webs from standard Young tableaux through an intermediate object called a \emph{multicolored noncrossing matching} that generalizes noncrossing matchings (which are themselves ubiquitous within algebraic combinatorics).  
Multicolored noncrossing matchings appear in the context of web graphs as \emph{strandings}: certain directed colored paths inside web graphs that encode  global structure, provide a new framework to analyze the invariant vector associated to a web graph, and invite new combinatorial operations on webs (see \cite{RT2025}).   The arguments of this paper do not require the precise definition of strandings so we omit it; however, the multicolored noncrossing matchings here are, in fact, strandings.

Our paper is structured as follows. Section~\ref{SEC:Tableaux and evacuation} defines tableaux and evacuation, making sense of the first column of Figure~\ref{fig: intro example}. Section~\ref{SEC:MC NCM and Reflection} introduces multicolored noncrossing matchings, illustrates how to build a multicolored noncrossing matching $\mathcal{M}^T$ from a tableau $T$, and describes reflection $\varphi(\mathcal{M})$ of a multicolored noncrossing matching (which both reflects the matching and exchanges colors in the matching), leading to the second column of Figure~\ref{fig: intro example}. Section~\ref{SEC:webs and reflection} defines web graphs, provides an algorithm for producing a web graph $w_T$ from a tableau $T$ via its multicolored noncrossing matching, and verifies that reflection over a  vertical line  is an involution of webs $w \leftrightarrow \varphi(w)$, producing the third column of Figure~\ref{fig: intro example}.  
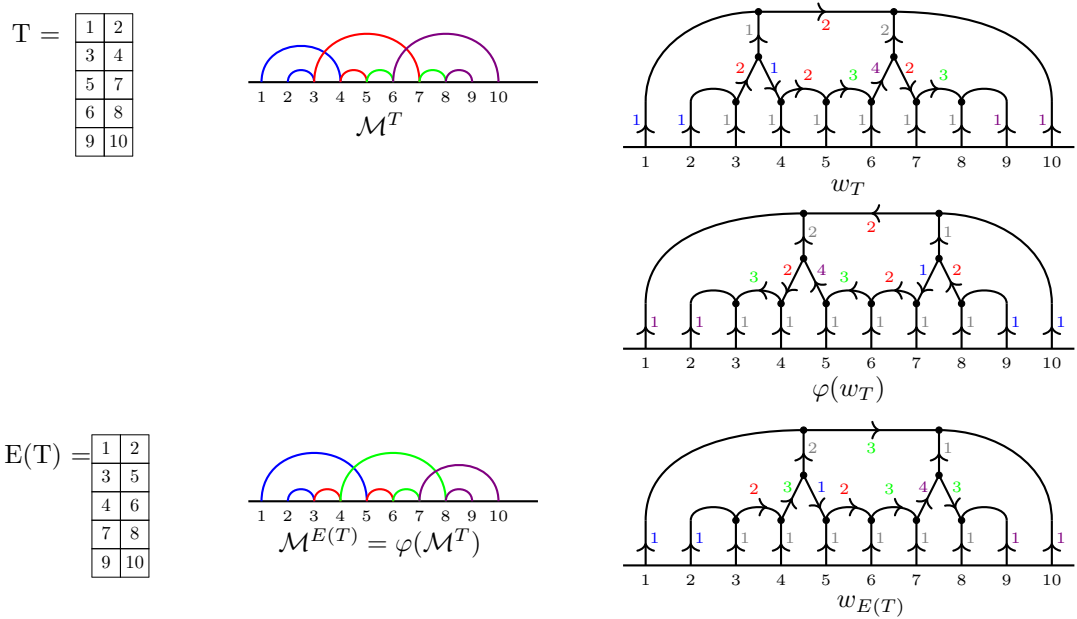
\begin{figure}[h!]
\begin{center}
\begin{tabular}{ccc}
$
\raisebox{.2in}{T =  \scalebox{0.7}{
\begin{ytableau}
                1 & 2 \\
                3 & 4  \\
                5 & 7 \\
                6 & 8 \\
                9 & 10 \\
            \end{ytableau}
     } }
     $
     & \hspace{0.25in}
     \begin{tikzpicture}[baseline=(current bounding box.center)]
    \begin{scope}[xscale=0.35, yscale=0.55]
        \node at (5.5,-1) {$\mathcal{M}^T$};
        \draw[thick] (.5,0) -- (11.5,0);
        \foreach \i in {1,...,10} \node[below, font=\tiny] (\i) at (\i,0) {\i};
        
        \draw[blue, thick] (1) to[out=90,in=90] (4);
        \draw[blue, thick] (2) to[out=90,in=90] (3);
        \draw[red, thick] (4) to[out=90,in=90] (5);
        \draw[red, thick] (3) to[out=90,in=90] (7);
        \draw[green, thick] (5) to[out=90,in=90] (6);
        \draw[green, thick] (7) to[out=90,in=90] (8);
        \draw[violet, thick] (8) to[out=90,in=90] (9);
        \draw[violet, thick] (6) to[out=90,in=90] (10);
        
    \end{scope}
\end{tikzpicture} \hspace{0.25in}
&
\raisebox{-0.65in}{\begin{tikzpicture}[xscale=0.6, yscale=0.6]
     \draw[thick] (.5,0) -- (10.5,0);
         \foreach \i in {1,...,10}
            \node[below, font=\tiny] (\i) at (\i,0) {\i};

    \node at (5.5,-.9) {$w_T$};

    \filldraw[black] (3,1) circle (2pt);
    \filldraw[black] (4,1) circle (2pt);
    \filldraw[black] (5,1) circle (2pt);
    \filldraw[black] (6,1) circle (2pt);
    \filldraw[black] (7,1) circle (2pt);
    \filldraw[black] (8,1) circle (2pt);
    \filldraw[black] (3.5,2) circle (2pt);
    \filldraw[black] (3.5,3) circle (2pt);
    \filldraw[black] (6.5,2) circle (2pt);
    \filldraw[black] (6.5,3) circle (2pt);

    \draw[thick] (2,1) to[out=90,in=90] (3,1);
    \draw[thick] (8,1) to[out=90,in=90] (9,1);

    \draw[thick] (1,1) to[out=90,in=180] (3.5,3);
    \draw[thick] (10,1) to[out=90,in=0] (6.5,3);

    \begin{scope}[thick,decoration={
    markings,
    mark=at position 0.5 with {\arrow{>}}}
    ] 

    \foreach \i in {1,...,10}
            \draw[thick, postaction={decorate}] (\i,0) -- (\i,1);
    \draw[thick, postaction={decorate}] (3, 1) -- (3.5, 2);
    \draw[thick, postaction={decorate}] (3.5, 2) -- (3.5, 3);
    \draw[thick, postaction={decorate}] (6, 1) -- (6.5, 2);
    \draw[thick, postaction={decorate}] (6.5, 2) -- (6.5, 3);

    \draw[thick, postaction={decorate}] (3.5, 3) -- (6.5, 3);

    \draw[thick, postaction={decorate}] (4,1) to[out=90,in=90] (5,1);
    \draw[thick, postaction={decorate}] (5,1) to[out=90,in=90] (6,1);
    \draw[thick, postaction={decorate}] (7,1) to[out=90,in=90] (8,1);
\end{scope}

    \node[blue, font=\tiny] at (.8,.6) {$1$};
    \node[blue, font=\tiny] at (1.8,.6) {$1$};
    \node[gray, font=\tiny] at (2.8,.6) {$1$};
    \node[gray, font=\tiny] at (3.8,.6) {$1$};
    \node[gray, font=\tiny] at (4.8,.6) {$1$};
    \node[gray, font=\tiny] at (5.8,.6) {$1$};
    \node[gray, font=\tiny] at (6.8,.6) {$1$};
    \node[gray, font=\tiny] at (7.8,.6) {$1$};
    \node[violet, font=\tiny] at (8.8,.6) {$1$};
    \node[violet, font=\tiny] at (9.8,.6) {$1$};

    \node[red, font=\tiny] at (3.1,1.75) {$2$};
    \node[blue, font=\tiny] at (3.85,1.75) {$1$};
    \node[red, font=\tiny] at (4.6,1.6) {$2$};
    \node[green, font=\tiny] at (5.6,1.6) {$3$};
    \node[violet, font=\tiny] at (6.1,1.75) {$4$};
    \node[red, font=\tiny] at (6.85,1.75) {$2$};
    \node[green, font=\tiny] at (7.6,1.6) {$3$};

    \node[gray, font=\tiny] at (3.3,2.6) {$1$};
    \node[gray, font=\tiny] at (6.3,2.6) {$2$};

    \node[red, font=\tiny] at (5,2.7) {$2$};

    \begin{scope}[thick,decoration={
    markings,
    mark=at position 0.4 with {\arrow{<}}}
    ] 
    \draw[thick, postaction={decorate}] (4, 1) -- (3.5, 2);
    \draw[thick, postaction={decorate}] (7, 1) -- (6.5, 2);

    \end{scope}
\end{tikzpicture}
}
     
\\      
& &
\raisebox{0in}{\begin{tikzpicture}[xscale=-0.6, yscale=0.6]
     \draw[thick] (.5,0) -- (10.5,0);
         \foreach \i in {1,...,10}
            \node[below, font=\tiny] (\i) at (11-\i,0) {\i};

    \node at (5.5,-.9) {$\varphi(w_T)$};

    \filldraw[black] (3,1) circle (2pt);
    \filldraw[black] (4,1) circle (2pt);
    \filldraw[black] (5,1) circle (2pt);
    \filldraw[black] (6,1) circle (2pt);
    \filldraw[black] (7,1) circle (2pt);
    \filldraw[black] (8,1) circle (2pt);
    \filldraw[black] (3.5,2) circle (2pt);
    \filldraw[black] (3.5,3) circle (2pt);
    \filldraw[black] (6.5,2) circle (2pt);
    \filldraw[black] (6.5,3) circle (2pt);

    \draw[thick] (2,1) to[out=90,in=90] (3,1);
    \draw[thick] (8,1) to[out=90,in=90] (9,1);

    \draw[thick] (1,1) to[out=90,in=180] (3.5,3);
    \draw[thick] (10,1) to[out=90,in=0] (6.5,3);

    \begin{scope}[thick,decoration={
    markings,
    mark=at position 0.5 with {\arrow{>}}}
    ] 

    \foreach \i in {1,...,10}
            \draw[thick, postaction={decorate}] (\i,0) -- (\i,1);
    \draw[thick, postaction={decorate}] (3, 1) -- (3.5, 2);
    \draw[thick, postaction={decorate}] (3.5, 2) -- (3.5, 3);
    \draw[thick, postaction={decorate}] (6, 1) -- (6.5, 2);
    \draw[thick, postaction={decorate}] (6.5, 2) -- (6.5, 3);

    \draw[thick, postaction={decorate}] (3.5, 3) -- (6.5, 3);

    \draw[thick, postaction={decorate}] (4,1) to[out=90,in=90] (5,1);
    \draw[thick, postaction={decorate}] (5,1) to[out=90,in=90] (6,1);
    \draw[thick, postaction={decorate}] (7,1) to[out=90,in=90] (8,1);
\end{scope}

    \node[blue, font=\tiny] at (.8,.6) {$1$};
    \node[blue, font=\tiny] at (1.8,.6) {$1$};
    \node[gray, font=\tiny] at (2.8,.6) {$1$};
    \node[gray, font=\tiny] at (3.8,.6) {$1$};
    \node[gray, font=\tiny] at (4.8,.6) {$1$};
    \node[gray, font=\tiny] at (5.8,.6) {$1$};
    \node[gray, font=\tiny] at (6.8,.6) {$1$};
    \node[gray, font=\tiny] at (7.8,.6) {$1$};
    \node[violet, font=\tiny] at (8.8,.6) {$1$};
    \node[violet, font=\tiny] at (9.8,.6) {$1$};

    \node[red, font=\tiny] at (3.1,1.75) {$2$};
    \node[blue, font=\tiny] at (3.85,1.75) {$1$};
    \node[red, font=\tiny] at (4.6,1.6) {$2$};
    \node[green, font=\tiny] at (5.6,1.6) {$3$};
    \node[violet, font=\tiny] at (6.1,1.75) {$4$};
    \node[red, font=\tiny] at (6.85,1.75) {$2$};
    \node[green, font=\tiny] at (7.6,1.6) {$3$};

    \node[gray, font=\tiny] at (3.3,2.6) {$1$};
    \node[gray, font=\tiny] at (6.3,2.6) {$2$};

    \node[red, font=\tiny] at (5,2.7) {$2$};

    \begin{scope}[thick,decoration={
    markings,
    mark=at position 0.4 with {\arrow{<}}}
    ] 
    \draw[thick, postaction={decorate}] (4, 1) -- (3.5, 2);
    \draw[thick, postaction={decorate}] (7, 1) -- (6.5, 2);

    \end{scope}
\end{tikzpicture}
} \\
$       
\raisebox{.2in}{ E(T) =\scalebox{0.7}{\begin{ytableau}
                1 & 2 \\
                3 & 5  \\
                4 & 6 \\
                7 & 8 \\
                9 & 10 \\
            \end{ytableau}
            }
            }
$

& 
\begin{tikzpicture}[baseline=(current bounding box.center)]
    \begin{scope}[xscale=0.35, yscale=0.55]
        \node at (5.5,-1) {$\mathcal{M}^{E(T)} = \varphi(\mathcal{M}^T)$};
        \draw[thick] (.5,0) -- (11.5,0);
        \foreach \i in {1,...,10} \node[below, font=\tiny] (\i) at (\i,0) {\i};
        
        \draw[blue, thick] (1) to[out=90,in=90] (5);
        \draw[blue, thick] (2) to[out=90,in=90] (3);
        \draw[red, thick] (3) to[out=90,in=90] (4);
        \draw[red, thick] (5) to[out=90,in=90] (6);
        \draw[green, thick] (6) to[out=90,in=90] (7);
        \draw[green, thick] (4) to[out=90,in=90] (8);
        \draw[violet, thick] (8) to[out=90,in=90] (9);
        \draw[violet, thick] (7) to[out=90,in=90] (10);
        
    \end{scope}
\end{tikzpicture}
& 
\raisebox{-0.65in}{\begin{tikzpicture}[xscale=-0.6, yscale=0.6]
     \draw[thick] (.5,0) -- (10.5,0);
         \foreach \i in {1,...,10}
            \node[below, font=\tiny] (\i) at (11-\i,0) {\i};

    \node at (5,-.9) {$w_{E(T)}$};

    \filldraw[black] (3,1) circle (2pt);
    \filldraw[black] (4,1) circle (2pt);
    \filldraw[black] (5,1) circle (2pt);
    \filldraw[black] (6,1) circle (2pt);
    \filldraw[black] (7,1) circle (2pt);
    \filldraw[black] (8,1) circle (2pt);
    \filldraw[black] (3.5,2) circle (2pt);
    \filldraw[black] (3.5,3) circle (2pt);
    \filldraw[black] (6.5,2) circle (2pt);
    \filldraw[black] (6.5,3) circle (2pt);

    \draw[thick] (2,1) to[out=90,in=90] (3,1);
    \draw[thick] (8,1) to[out=90,in=90] (9,1);

    \draw[thick] (1,1) to[out=90,in=180] (3.5,3);
    \draw[thick] (10,1) to[out=90,in=0] (6.5,3);

    \begin{scope}[thick,decoration={
    markings,
    mark=at position 0.5 with {\arrow{<}}}
    ] 
    \draw[thick, postaction={decorate}] (3.5, 3) -- (6.5, 3);

    \draw[thick, postaction={decorate}] (4,1) to[out=90,in=90] (5,1);
    \draw[thick, postaction={decorate}] (5,1) to[out=90,in=90] (6,1);
    \draw[thick, postaction={decorate}] (7,1) to[out=90,in=90] (8,1);
\end{scope}

    \node[blue, font=\tiny] at (9.8,.6) {$1$};
    \node[blue, font=\tiny] at (8.8,.6) {$1$};
    \node[gray, font=\tiny] at (2.8,.6) {$1$};
    \node[gray, font=\tiny] at (3.8,.6) {$1$};
    \node[gray, font=\tiny] at (4.8,.6) {$1$};
    \node[gray, font=\tiny] at (5.8,.6) {$1$};
    \node[gray, font=\tiny] at (6.8,.6) {$1$};
    \node[gray, font=\tiny] at (7.8,.6) {$1$};
    \node[violet, font=\tiny] at (1.8,.6) {$1$};
    \node[violet, font=\tiny] at (.8,.6) {$1$};

    \node[green, font=\tiny] at (3.1,1.75) {$3$};
    \node[violet, font=\tiny] at (3.85,1.75) {$4$};
    \node[green, font=\tiny] at (4.6,1.7) {$3$};
    \node[red, font=\tiny] at (5.6,1.7) {$2$};
    \node[blue, font=\tiny] at (6.1,1.75) {$1$};
    \node[green, font=\tiny] at (6.85,1.75) {$3$};
    \node[red, font=\tiny] at (7.6,1.7) {$2$};

    \node[gray, font=\tiny] at (3.3,2.6) {$1$};
    \node[gray, font=\tiny] at (6.3,2.6) {$2$};

    \node[green, font=\tiny] at (5,2.6) {$3$};

    \begin{scope}[thick,decoration={
    markings,
    mark=at position 0.5 with {\arrow{>}}}
    ] 
    \foreach \i in {1,...,10}
            \draw[thick, postaction={decorate}] (\i,0) -- (\i,1);
    \draw[thick, postaction={decorate}] (4, 1) -- (3.5, 2);
    \draw[thick, postaction={decorate}] (7, 1) -- (6.5, 2);

    \draw[ thick, postaction={decorate}] (3.5, 2) -- (3.5, 3);
    \draw[ thick, postaction={decorate}] (6.5, 2) -- (6.5, 3);
    \end{scope}

    \begin{scope}[thick,decoration={
    markings,
    mark=at position 0.75 with {\arrow{>}}}
    ] 
    \draw[thick, postaction={decorate}] (3.5, 2) -- (3, 1);
    \draw[thick, postaction={decorate}] (6.5, 2) -- (6, 1);

    \end{scope}

\end{tikzpicture}
}
\end{tabular}
\end{center}
\caption{Example of a tableau $T$ and its evacuation, plus the multicolored noncrossing matchings and  web graphs associated to each.  The second row has the reflection $\varphi(w_T)$ of $w_T$ over the line $x=5.5$ which differs from $w_{E(T)}$ only in a few edge directions and weights.} \label{fig: intro example}
\end{figure}

Finally, Section~\ref{SEC: Evacuation and reflection} shows that each square in Figure~\ref{commutative} commutes. To do this, we rely on a result of Patrias-Pechenik comparing the rotation of a rectangular tableau with its evacuation \cite{PatriasPechenik2023}. The main subtlety of our result is that $\mathfrak{sl}_n$ web graphs have edge-weights in the set $\{1, 2, \ldots, n-1\}$ and these weights are in general not invariant under reflection.  In other words, the reflection $\varphi(w_T)$ does not have exactly the same edge weights and directions as $w_{E(T)}$ in Figure~\ref{fig: intro example}.  

Thus, our main result---Lemma~\ref{lemma: right square commutes}---actually proves several claims.  The first is that the reflected web graph $\varphi(w_T)$ agrees with the web graph $w_{E(T)}$ associated to the evacuation $E(T)$ as \emph{undirected, unweighted graphs}.  Since edges in $\mathfrak{sl}_3$ and $\mathfrak{sl}_2$ webs are generally weighted and directed according to conventions that ensure each edge weight is $1$, this recovers Patrias-Pechenik's results. In general, properly speaking, web graphs are equivalence classes under certain web relations; the conventions for $n=2, 3$ select a particular web graph in each equivalence class.  The second claim in Lemma~\ref{lemma: right square commutes} is that $\varphi(w_T)$ and $w_{E(T)}$ are in the same equivalence class of $\mathfrak{sl}_n$ web graphs, and in fact differ only by the  equivalence relation \emph{edge-flipping}.  Finally, the third claim in Lemma~\ref{lemma: right square commutes} identifies precisely which edges must be flipped to transform $\varphi(w_T)$ into $w_{E(T)}$ as web graphs (not equivalence classes). As Figure~\ref{fig: intro example} suggests, the flipped edges are, loosely speaking, the interior edges that are not vertical. Corollary~\ref{corollary: sl3 and sl4 web conventions} confirms that using the usual conventions for edge weights and directions of $\mathfrak{sl}_3$ or $\mathfrak{sl}_4$ webs means that $\varphi(w_T)=w_{E(T)}$ as web graphs (not as equivalence classes).

\begin{figure}[h]
    \begin{center}
        \begin{tikzcd}
        \textup{Young tableaux} \arrow[r]\arrow[d,"E"] & \textup{multicolored NCMs} \arrow[r] \arrow[d,"\varphi"] & \textup{webs under edge-flip equivalence} \arrow[d,"\varphi"] \\
        \textup{Young tableaux} \arrow[r] & \textup{multicolored NCMs} \arrow[r] & \textup{webs under edge-flip equivalence}
        \end{tikzcd}
    \end{center} 
    \caption{Commutative diagram describing our argument, where $E$ denotes the evacuation map,  $\varphi$ denotes reflection maps, and NCM abbreviates noncrossing matching} \label{commutative}
\end{figure}

\subsection{List of notation}

For the reader's convenience, we list all our notation below.
\begin{itemize}
    \item $T$ is a standard Young tableaux with $N$ boxes, $n$ rows, and $k$ columns, so $N=nk$.
    \item $E(T)$ is the evacuation of the tableau $T$ and $\rho(T)$ is the $180$ degree rotation of the tableau $T$.
    \item An ordered pair $(i,j)$ is an arc with initial point $i$ and terminal point $j$.   Each $x\in \{1,2,\ldots,n-1\}$ denotes a different color.  The (colored) matching $\mathcal{M}_x$  consists of a collection of arcs $(i,j)$ colored $x$.
    \item $\mathcal{M} = (\mathcal{M}_1, \mathcal{M}_2, \ldots, \mathcal{M}_{n-1})$ is a multicolored matching, with matchings $\mathcal{M}_x$ of colors $x=1,  \ldots, n-1$. 
    \item $\mathcal{M}^T$ is the multicolored noncrossing matching (NCM) obtained from the standard tableau $T$. The reflection of a multicolored matching $\mathcal{M}$ is denoted $\varphi(\mathcal{M})$. The multicolored NCM obtained from the rotated tableau $\rho(T)$ is denoted $\mathcal{M}^{\rho(T)}$. 
    \item $w$ is a web graph, with vertices denoted $v$ and edges denoted $e$. The edge weight of $e$ is $\ell(e)$.
    \item $w_\mathcal{M}$ is the web graph obtained from the multicolored NCM $\mathcal{M}$ and
 $w_T$ is the web graph obtained from $w_{\mathcal{M}_T}$ according to Corollary~\ref{corollary: webs from tableau in standard form}.
 $\varphi(w)$ is the reflection of the web graph $w$.

\end{itemize}

\subsection{Acknowledgments} The first two authors were partially supported by the Smith College SURF program.  The fourth author was partially supported by NSF grant DMS-2349088.  The authors gratefully acknowledge Heather M.~Russell and Sharon Spaulding. 

\section{Tableaux and evacuation} \label{SEC:Tableaux and evacuation}

 In this section, we define the main tableau operation described in this paper, namely, evacuation. We also summarize the results of Patrias-Pechenik concerning rotation of webs and evacuation of Young tableaux, which lays the foundation for our main argument.

\begin{definition}
\label{definition: Young tableaux}
A \emph{Young diagram} is an array of left-justified boxes corresponding to an integer partition $\lambda=(\lambda_1\geq\lambda_2\geq\ldots\geq\lambda_n\geq0)$, where $\lambda_i$ denotes the number of boxes in the $i^{th}$ row from the top.

A \emph{rectangular standard Young tableau} (SYT) is a bijective filling of a rectangular Young diagram with integers $1,2,\ldots N$ so that entries strictly increase left to right in  rows and top to bottom in  columns.
\end{definition}

This manuscript focuses on Young diagrams of rectangular shape, meaning we consider Young diagrams with $N$ boxes of shape $n \times k$, where $n$ is the number of rows and $k$ is the length of each row, or equivalently the number of columns.  So in our case, the total number of boxes in a rectangular Young diagram satisfies $N=nk$. The tableaux labeled $T$ and $E(T)$ in Figures~\ref{fig:sl3evac} and~\ref{fig:evac-flip} are examples of  standard Young tableaux. 

Next, we define promotion, which is a key step in the operation of evacuation.

\begin{definition} \label{definition: promotion} 
A \emph{jeu de taquin slide} in a configuration like $\begin{array}{|c|c|} \cline{1-2} \bullet & b \\ \cline{1-2} a & \multicolumn{1}{c}{\hspace{0.5em}} \\ \cline{1-1} \multicolumn{2}{c}{}\end{array}$ is a slide of one of the entries $a$ or $b$ up or to the left, respectively, that moves the lesser value into the empty cell marked $\bullet$. 

Suppose $T$ is a rectangular standard Young tableau with $N$ boxes.  To obtain the \emph{promotion} of $T$, remove the entry of the cell in the most northwest corner of $T$. Perform a sequence of jeu de taquin slides, continuing until the empty cell is in the southeast corner. Then subtract one from all entries and fill the empty cell with the integer $N$. 
\end{definition}

Since $T$ is standard, the entry in its most northwest corner will always be the smallest entry of $T$.

The central operation studied in this article is an involution called evacuation, a type of repeated promotion on standard Young tableaux. We give an example of evacuation in Figure~\ref{fig:sl3evac}.

\begin{definition} \label{definition: evacuation}
Suppose $T$ is a rectangular standard Young tableau with $N$ boxes. To obtain its \emph{evacuation} $E(T)$, repeat the following steps until all entries are negative:
\begin{enumerate}
    \item remove the entry of the cell in the northwest corner of $T$,
    \item perform jeu de taquin slides until the empty cell is in the most southeast position possible, and
    \item fill the empty cell with the negative of the  entry removed in Step 1, and fix it in all future iterations.
\end{enumerate} 
When all cells are negative, add $N+1$ to all entries. 
\end{definition}

\begin{figure}
    \centering
    $T=$
    \ytableausetup{centertableaux}
\begin{ytableau}
1 & 3 \\
2 & 4 \\
5 & 6 \\
\end{ytableau}
\hspace{1in}
$E(T)=$
\begin{ytableau}
1 & 2 \\
3 & 5 \\
4 & 6 \\
\end{ytableau}
\vspace{.5cm}

    \ytableausetup{centertableaux}
\begin{ytableau}
1 & 3 \\
2 & 4 \\
5 & 6 \\
\end{ytableau}
\begin{tikzpicture}
    \draw [-stealth](0,0) -- (.5,0);
\end{tikzpicture}
\begin{ytableau}
\bullet & 3 \\
2 & 4 \\
5 & 6 \\
\end{ytableau}
\begin{tikzpicture}
    \draw [-stealth](0,0) -- (.5,0);
\end{tikzpicture}
\begin{ytableau}
2 & 3 \\
4 & \bullet \\
5 & 6 \\
\end{ytableau}
\begin{tikzpicture}
    \draw [-stealth](0,0) -- (.5,0);
\end{tikzpicture}
\begin{ytableau}
2 & 3 \\
4 & 6 \\
5 & \bullet \\
\end{ytableau}
\begin{tikzpicture}
    \draw [-stealth](0,0) -- (.5,0);
\end{tikzpicture}
\begin{ytableau}
2 & 3 \\
4 & 6 \\
5 & *(green)-1 \\
\end{ytableau}

\vspace{.5cm}

\ytableausetup{centertableaux}

\begin{ytableau}
3 & 6 \\
4 & *(green)-2\\
5 & *(green)-1 \\
\end{ytableau}
\begin{tikzpicture}
    \draw [-stealth](0,0) -- (.25,0);
    \draw [-stealth](.3,0) -- (.5,0);
\end{tikzpicture}
\begin{ytableau}
4 & 6 \\
5 & *(green)-2\\
*(green)-3 & *(green)-1 \\
\end{ytableau}
\begin{tikzpicture}
    \draw [-stealth](0,0) -- (.25,0);
    \draw [-stealth](.3,0) -- (.5,0);
\end{tikzpicture}
\begin{ytableau}
5 & 6 \\
*(green)-4 & *(green)-2\\
*(green)-3 & *(green)-1 \\
\end{ytableau}
\begin{tikzpicture}
    \draw [-stealth](0,0) -- (.25,0);
    \draw [-stealth](.3,0) -- (.5,0);
\end{tikzpicture}
\begin{ytableau}
6 & *(green)-5 \\
*(green)-4 & *(green)-2\\
*(green)-3 & *(green)-1 \\
\end{ytableau}
\begin{tikzpicture}
    \draw [-stealth](0,0) -- (.25,0);
    \draw [-stealth](.3,0) -- (.5,0);
\end{tikzpicture}
\begin{ytableau}
*(green)-6 & *(green)-5 \\
*(green)-4 & *(green)-2\\
*(green)-3 & *(green)-1 \\
\end{ytableau}

    \caption{Evacuation of a SYT of shape $3\times2$. The first row shows the original tableau $T$ (left) and its evacuation $E(T)$ (right). The second row shows the removal of the entry of the cell in the northwest corner \textemdash indicating the empty cell with $\bullet$ \textemdash followed by jeu de taquin slides. The third row shows the intermediate SYT after each iteration of jeu de taquin slides, indicating fixed, or negative, cells with green.}
    \label{fig:sl3evac}
\end{figure}
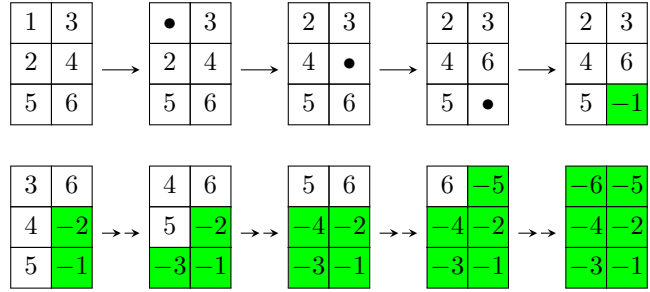

\subsection{Patrias--Pechenik: evacuation of rectangular tableaux is rotation and reversal}
 One key step of our proof uses a result of Patrias-Pechenik, which established that the evacuation of rectangular row-strict tableaux can be found by rotating $180$ degrees and replacing each entry $i$ with $N+1-i$. Since row-strict tableaux weakly increase left to right on each row, row-strict is a weaker condition than standard; thus, the result applies to the tableaux in this article, namely, rectangular standard Young tableaux.

\begin{lemma}[Patrias-Pechenik] \label{lemma: Patrias Pechenik}
Let $T$ be a row-strict tableau of rectangular shape $\lambda = (k,k,...,k)$ with maximum entry $N=nk$. Then $E(T)$ is given by rotating $T$ by $180\degree$ and replacing each entry according to the map $i\rightarrow N+ 1- i$. 
\label{PP}
\end{lemma}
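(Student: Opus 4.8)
The plan is to prove the equivalent statement that the two fillings $E(T)$ and $c(\rho(T))$ agree cell-by-cell, where $c$ denotes the complementation $i \mapsto N+1-i$ and $\rho$ is the $180\degree$ rotation; both have the same rectangular shape because a rectangle is invariant under $\rho$, so only the entries are in question. First I would record two structural features of the evacuation procedure of Definition~\ref{definition: evacuation}. Because evacuation never alters the surviving positive entries, at the start of round $r$ the positive cells carry exactly the values $\{r, r+1, \ldots, N\}$ arranged as a standard filling of an upper-left Young subshape, whose northwest corner therefore holds the minimum value $r$; hence round $r$ removes the value $r$ and, after $N+1$ is added at the end, deposits the final value $N+1-r$ in the cell it freezes. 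On the other side, $c(\rho(T))$ places the value $N+1-r$ in the cell $\rho(\mathrm{pos}_T(r))$, where $\mathrm{pos}_T(r)$ denotes the cell of $T$ containing $r$. Thus the lemma reduces to a single clean assertion: for every $r$, the cell frozen in round $r$ of evacuation is exactly the $180\degree$ rotation of the cell of $T$ containing $r$.

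To prove this reduced claim I would track the two chains of cells it compares. Writing the cells of $T$ as the poset $[n]\times[k]$ under componentwise order, the induced map $\rho\colon (i,j)\mapsto (n+1-i,\,k+1-j)$ is an order-reversing bijection (an anti-automorphism) of this poset --- this is the structural reason a rectangle is special. The cells $\mathrm{pos}_T(1), \mathrm{pos}_T(2), \ldots$ are added so that $\{\mathrm{pos}_T(s): s\le r\}$ is the order ideal $\lambda^{(r)}$ of entries $\le r$. Dually, the frozen cells $F_1, F_2, \ldots$ satisfy that $\{F_s : s \le r\}$ is always a filter, since the non-frozen region must remain an order ideal and the jeu de taquin hole always terminates at a maximal available cell. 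I would then argue by induction on $r$ that $F_r = \rho(\mathrm{pos}_T(r))$, the engine being the fundamental symmetry of jeu de taquin: a sequence of southeast-directed slides in $T$ is carried by $\rho$ to the corresponding sequence of northwest-directed reverse slides, and rectification is independent of the order in which slides are performed. Rotating the configuration at round $r$ should turn the evacuation slide that selects the maximal cell $F_r$ into the reverse slide that, in $\rho(T)$, recovers the cell vacated at the analogous step, forcing $F_r = \rho(\mathrm{pos}_T(r))$.

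The main obstacle is exactly this last correspondence: making precise that $\rho$ intertwines the forward slides that push the hole to the southeast with the reverse slides governing $\rho(T)$, and confirming that the terminal cell of each round lands on the rotated image of $\mathrm{pos}_T(r)$ rather than merely on some maximal cell of the correct filter. A cleaner way to discharge this step, and the route I would ultimately take, is Fomin's growth-diagram description of evacuation: encode $T$ as its saturated chain $\emptyset = \lambda^{(0)}\lessdot \cdots \lessdot \lambda^{(N)}=\lambda$, fill the triangular array by the local growth rules, and read $E(T)$ off the complementary boundary. Reflecting the triangular array across its axis of symmetry implements precisely the rotation-and-complement operation on a rectangular shape, so the equality $E(T) = c(\rho(T))$ would fall out of the reflection symmetry of the local rules. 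I would anchor either argument by checking the base case $r=1$ and the self-consistency of the reflected diagram directly, as in Figure~\ref{fig:sl3evac}.
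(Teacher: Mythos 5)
The paper gives no proof of this lemma at all---it is quoted from Patrias--Pechenik \cite{PatriasPechenik2023} and only illustrated by example---so any argument you supply is necessarily ``different from the paper's.'' Your opening reduction is correct and clean for \emph{standard} tableaux: since round $r$ of evacuation removes the value $r$ and ultimately deposits $N+1-r$ in the cell $F_r$ it freezes, the lemma is equivalent to $F_r=\rho(\mathrm{pos}_T(r))$ for all $r$, and your observations that the unfrozen region stays an order ideal (so the frozen cells form a filter) and that $\rho$ is an anti-automorphism of the cell poset are both right. Two caveats: the statement as written is for \emph{row-strict} tableaux, and for a row-strict non-standard filling the northwest corner need not contain the minimum, so ``round $r$ removes the value $r$'' already fails there; and your induction never explains how the hypothesis $F_s=\rho(\mathrm{pos}_T(s))$ for $s<r$ feeds into the step for $r$, since at round $r$ the entry $r$ has already been displaced from $\mathrm{pos}_T(r)$ by the earlier slides.

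The genuine gap is in the core step, and neither of your proposed routes closes it. Saying that $\rho$ carries southeast slides to reverse northwest slides is true but only relates $E(T)$ to the evacuation of $\rho(T)$ read backwards; it does not by itself force the $r$-th slide path in $\Delta^{r-1}T$ to terminate at $\rho(\mathrm{pos}_T(r))$. Equivalently, what must be shown is that $\mathrm{sh}(\Delta^{r}T)=(\lambda^{(r)})^{c}$, i.e.\ that the rectification of the skew tableau of entries $>r$ (shape $\lambda/\lambda^{(r)}$ inside the rectangle) has shape exactly the complement $(\lambda^{(r)})^{c}$---this is where rectangularity must do real work, e.g.\ via the fact that $c^{\lambda}_{\mu\nu}\neq 0$ for rectangular $\lambda$ forces $\nu=\mu^{c}$, or via an explicit slide-by-slide complementation argument. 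Your ``cleaner'' growth-diagram route invokes the wrong symmetry: reflecting Fomin's triangular array across its axis of symmetry is the statement that the local rules are symmetric in their two inputs, which proves $E^{2}=\mathrm{id}$ for \emph{every} shape; since the rotate-and-complement description of $E(T)$ is false for non-rectangular shapes, that symmetry cannot be the engine. A correct growth-diagram proof instead verifies that the local rules are preserved by the complementation anti-automorphism $\mu\mapsto\mu^{c}$ of the lattice of partitions contained in the rectangle, and that step is exactly what is missing from your sketch.
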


We denote the tableaux obtained from $T$ by rotating $180$ degrees with $\rho(T)$. Figure~\ref{fig:evac-flip} gives an example.

\begin{figure}[H]
    \centering
    $T=$
    \ytableausetup{centertableaux}
\begin{ytableau}
1 & 3 & 5\\
2 & 4& 8\\
6 & 9 & 10\\
7 & 11 & 12\\
\end{ytableau}
$\rightsquigarrow$ rotate $180\degree \rightsquigarrow$
$\rho(T)=$ \begin{ytableau}
12 & 11 & 7 \\
10 & 9 & 6 \\
8 & 4 & 2 \\
5 & 3 & 1\\
\end{ytableau}
$\rightsquigarrow$ 
reverse alphabet
$\rightsquigarrow$ 
$E(T)=$ \begin{ytableau}
 1 & 2 & 6 \\
 3 & 4 & 7 \\
 5 & 9 & 11 \\
 8 & 10 & 12 \\
\end{ytableau}

\vspace{.5cm}
    \caption{Example of Lemma~\ref{PP}, showing the original rectangular standard Young tableau $T$, the 180 degree rotation  $\rho(T)$, and the evacuation $E(T)$. }
    \label{fig:evac-flip}
\end{figure}

\section{Multicolored NCMs and Reflection} \label{SEC:MC NCM and Reflection}

In this section, we give an algorithm to construct noncrossing matchings on enumerated boundary points from rectangular standard Young tableaux. In the next section, we use these NCMS to build web graphs that correspond exactly to the rectangular standard Young tableaux.  The intermediate steps we describe in this section will be applied to define reflection on noncrossing matchings, and later on webs. 

\begin{definition}
    An \emph{arc} in $\{1, 2, \ldots, N\}$ is an ordered pair $(i,j)$ with $i<j$.  If $(i,j)$ is an arc, we refer to $i$ as the \emph{start} (or \emph{initial point}) of the arc, $j$ as the \emph{end} (or \emph{terminal point}) of the arc, and $i, j$ collectively as the  \emph{boundary points} of the arc. 
\end{definition}
    
    We often draw arcs as triangles or semicircles above the $x$-axis.

\begin{definition}  
    A \emph{matching} on $\{1, 2, \ldots, N\}$ is a set of arcs $\{(i_1, j_1), (i_2, j_2), \ldots, (i_k,j_k)\}$ so that the union of endpoints $\{i_1, i_2, \ldots, i_k\} \cup \{j_1, j_2, \ldots, j_k\} \subseteq \{1, 2, \ldots, N\}$ has cardinality $2k$. We say in this case that the arcs have \emph{disjoint boundary points}.
    
    Two arcs $(i,j)$ and $(i',j')$ are \emph{crossing} if and only if $i<i'<j<j'$ as shown in Figure~\ref{Fig:crossingVSnoncrossing}.  A \emph{noncrossing matching (NCM)} is a matching in which no two arcs cross. 
\end{definition}
    \begin{figure}[h]
 \begin{center}
     \begin{tikzpicture}[scale=0.7]
\draw[thick] (.5,0) -- (6.5,0);
\draw[thick] (8.5,0) -- (14.5,0);

\draw[thick] (2,0) to[out=90, in=180] (2.5,.5);
\draw[thick] (2.5,.5) to[out=0, in=90] (3,0);

\draw[thick] (1,0) to[out=90, in=180] (2.5,1);
\draw[thick] (2.5,1) to[out=0, in=90] (4,0);

\draw[thick] (5,0) to[out=90, in=180] (5.5,.5);
\draw[thick] (5.5,.5) to[out=0, in=90] (6,0);

  \node at (1, -0.5) {1};
    \node at (2, -0.5) {2};
    \node at (3, -0.5) {3};
    \node at (4,-0.5) {4};
    \node at (5,-0.5) {5};
    \node at (6, -0.5) {6};

    \node at (9, -0.5) {1};
    \node at (10, -0.5) {2};
    \node at (11, -0.5) {3};
    \node at (12,-0.5) {4};
    \node at (13,-0.5) {5};
    \node at (14, -0.5) {6};

\draw[thick] (9,0) to[out=90, in=180] (10,.75);
\draw[thick] (10,.75) to[out=0, in=90] (11,0);

\draw[thick] (10,0) to[out=90, in=180] (11,.75);
\draw[thick] (11,.75) to[out=0, in=90] (12,0);

\draw[thick] (13,0) to[out=90, in=180] (13.5,.5);
\draw[thick] (13.5,.5) to[out=0, in=90] (14,0);

\end{tikzpicture} \hspace{0.5in}
\end{center}

    \caption{Example illustrating an NCM (left) and a crossing matching (right), on the set $\{1,2,3,4,5,6\}$. The matching on the left has arcs $\{(1,4),(2,3), (5,6)\}$ and the matching on the right has arcs $\{(1,3),(2,4),(5,6)\}$.}
\label{Fig:crossingVSnoncrossing}  
\end{figure}
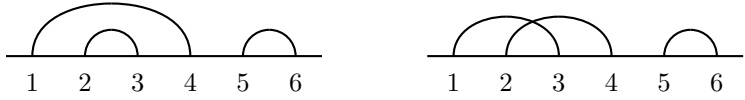

\begin{definition}   
    A \emph{multicolored matching} is an ordered set of matchings $\mathcal{M} = (\mathcal{M}_1, \mathcal{M}_2, \ldots, \mathcal{M}_{n-1})$ on $\{1, 2, \ldots, N\}$ so that each of $\{1, 2, \ldots, N\}$ appears,  for at least one $x$, as a boundary point $i, j$ of an arc $(i,j) \in \mathcal{M}_x$.  
    We say the arcs in $\mathcal{M}_x$ have \emph{color $x$}  for each $x=1, 2, \ldots, n-1$.
     A \emph{multicolored noncrossing matching (NCM)} is a collection of matchings $\mathcal{M} = (\mathcal{M}_1, \ldots, \mathcal{M}_{n-1})$ in which each $\mathcal{M}_x$ is  noncrossing.  
\end{definition}

Note that if $x \neq x'$ then the arcs in the matchings $\mathcal{M}_x$ and $\mathcal{M}_{x'}$ are permitted to cross each other.  We draw multicolored NCMs by associating a distinct color with each $x$.

\begin{figure}[h]
 \begin{center}
     \begin{tikzpicture}[scale=0.7]
\draw[thick] (.5,0) -- (8.5,0);

\draw[blue,thick] (2,0) to[out=90, in=180] (2.5,.5);
\draw[blue,thick] (2.5,.5) to[out=0, in=90] (3,0);
\draw[red,thick] (4,0) to[out=90, in=180] (4.5,.5);
\draw[red,thick] (4.5,.5) to[out=0, in=90] (5,0);
\draw[blue,thick] (1,0) to[out=90, in=180] (2.5,1);
\draw[blue,thick] (2.5,1) to[out=0, in=90] (4,0);
\draw[red,thick] (3,0) to[out=90, in=180] (5,1.5);
\draw[red,thick] (5,1.5) to[out=0, in=90] (7,0);
\draw[green,thick] (5,0) to[out=90, in=180] (5.5,.5);
\draw[green,thick] (5.5,.5) to[out=0, in=90] (6,0);
\draw[green,thick] (7,0) to[out=90, in=180] (7.5,.5);
\draw[green,thick] (7.5,.5) to[out=0, in=90] (8,0);

  \node at (1, -0.5) {1};
    \node at (2, -0.5) {2};
    \node at (3, -0.5) {3};
    \node at (4,-0.5) {4};
    \node at (5,-0.5) {5};
    \node at (6, -0.5) {6};
    \node at (7, -0.5) {7};
    \node at (8, -0.5) {8};

\end{tikzpicture} \hspace{0.5in}
\end{center}
    \caption{Example of a multicolored NCM $\mathcal{M}=\{\mathcal{M}_1,\mathcal{M}_2, \mathcal{M}_3\}$ on the set $\{1,2,3,4,5,6,7,8\}$, with matchings $\mathcal{M}_1=\{(1,4),(2,3)\}$, $\mathcal{M}_2=\{(3,7),(4,5)\}$, $\mathcal{M}_3 = \{(5,6), (7,8)\}$. We draw $\mathcal{M}_1$ in blue, $\mathcal{M}_2$ in red, and $\mathcal{M}_3$ in green. The arcs $(1,4)$ and $(3,7)$ can cross since $(1,4)\in \mathcal{M}_1$ and $(3,7)\in \mathcal{M}_2$.}
    \label{fig:crossing}
\end{figure}
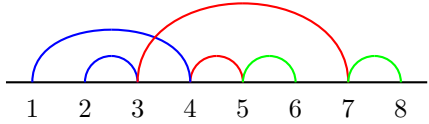

\subsection{Constructing a multicolored NCM $\mathcal{M}$ from a tableau $T$}

The next result gives a classical algorithm to create a multicolored NCM from a standard Young tableau.  We omit the proof because it is identical to that in~\cite{Tymoczko2012}.

\begin{lemma} \label{lemma: multicolored NCM from tableau}
Let $T$ be a standard Young tableau with $n$ rows.  For each row $x=1,2,\ldots,n-1,$ 
 \begin{enumerate}
            \item Let $j$ be the next available number reading left to right along row $x+1$.
            \item Find the rightmost number in row $x$ that is smaller than $j$  and has not yet been paired with a number on row $x+1$.  Say this number is $i$.
            \item Pair $i$ and $j$ and create arc $(i,j) \in \mathcal{M}_x$.
        \end{enumerate}   
Each $\mathcal{M}_x$ is a noncrossing matching, and the collection $\mathcal{M}^T = \{\mathcal{M}_1, \ldots, \mathcal{M}_{n-1}\}$ is  the multicolored noncrossing matching associated to the tableau $T$. 
\end{lemma}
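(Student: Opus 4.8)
The plan is to fix a color $x \in \{1, \ldots, n-1\}$ and analyze the greedy pairing between row $x$ and row $x+1$ directly, establishing three things in turn: that the procedure never stalls (well-definedness), that it produces a genuine matching, and that this matching is noncrossing. Throughout I would write the entries of row $x$ in increasing left-to-right order as $a_1 < a_2 < \cdots < a_k$ and those of row $x+1$ as $b_1 < b_2 < \cdots < b_k$; since $T$ is a rectangular standard Young tableau, both rows have the same length $k$, and column-strictness gives the inequality $a_c < b_c$ for every column $c$. The algorithm processes the values $b_1, b_2, \ldots, b_k$ in this order, at stage $m$ pairing $b_m$ with the rightmost as-yet-unpaired entry of row $x$ that is smaller than $b_m$.

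For well-definedness, I would argue at stage $m$ that an eligible entry of row $x$ always exists. The $m$ entries $a_1 < \cdots < a_m$ are all strictly less than $b_m$, because $a_m < b_m$ by column-strictness. Before stage $m$ only $m-1$ entries of row $x$ have been paired, so if all of $a_1, \ldots, a_m$ were already used that would require $m$ pairings; hence at least one of $a_1, \ldots, a_m$ remains unpaired, supplying a candidate smaller than $b_m$ and therefore a rightmost one. This counting step---leveraging the column inequality $a_m < b_m$ to guarantee enough small candidates---is the crux of the argument and the place where standardness (rather than mere row-strictness) and the rectangular shape are genuinely used; I expect it to be the main obstacle.

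Granting well-definedness, the matching claim is immediate: the $k$ stages consume the $k$ distinct values $b_1, \ldots, b_k$ of row $x+1$ and $k$ distinct values of row $x$ (distinct because each is required to be previously unpaired), so all $2k$ entries of the two rows occur exactly once as boundary points, and $\mathcal{M}_x$ is a matching with $i < j$ for every arc by construction.

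The noncrossing property I would prove by contradiction. Suppose arcs $(i,j)$ and $(i',j')$ of $\mathcal{M}_x$ cross, that is, $i < i' < j < j'$. Since $j < j'$, the value $j$ is processed before $j'$, so $i'$---which is paired only at the stage for $j'$---is still unpaired at the moment $j$ is handled; moreover $i' < j$. Thus $i'$ is an unpaired entry of row $x$ smaller than $j$ and strictly to the right of $i$, contradicting the choice of $i$ as the rightmost such entry. Hence no two arcs of $\mathcal{M}_x$ cross. Finally, because each $\mathcal{M}_x$ pairs every entry of rows $x$ and $x+1$, each row participates in at least one color, so every value in $\{1, \ldots, N\}$ appears as a boundary point of at least one $\mathcal{M}_x$; combined with the per-color noncrossing property, this shows that $\mathcal{M}^T = (\mathcal{M}_1, \ldots, \mathcal{M}_{n-1})$ is a multicolored noncrossing matching.
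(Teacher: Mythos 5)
Your proof is correct and is essentially the standard greedy/matched-parentheses argument that the paper itself omits, deferring to \cite{Tymoczko2012}: column-strictness gives $a_m<b_m$ so a counting argument yields well-definedness, and the ``rightmost unpaired smaller entry'' rule forces noncrossing exactly as you argue. One small quibble: rectangularity is not actually what makes the well-definedness count go through (that only needs row $x+1$ to be no longer than row $x$, which holds for any Young diagram, plus column-strictness); rectangularity is instead what guarantees every element of $\{1,\ldots,N\}$ occurs as a boundary point, so that $\mathcal{M}^T$ meets the paper's definition of a multicolored matching.
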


Note that if $T$ is an $n \times k$ rectangle then each matching $\mathcal{M}_x$ for $x \in \{1, 2, \ldots, n-1\}$ has $k$ arcs.

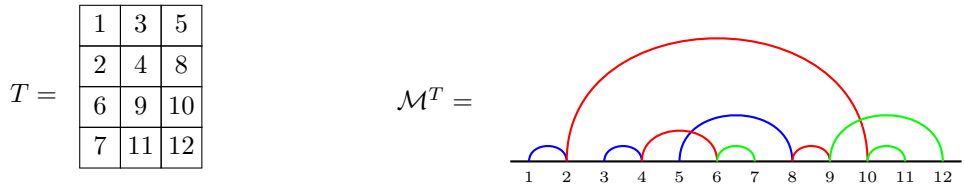
\begin{figure} [h]
    \centering
\begin{tabular}{@{}c@{\hskip 0.5em}c@{\hskip 0.5em}c@{\hskip 0.5em}c@{\hskip 0.5em}c@{\hskip 0.5em}c@{\hskip 0.5em}c@{}}

\begin{tikzpicture}[baseline]
  \node at (0,0) {$T=$};
\end{tikzpicture}

&

\scalebox{1}{%
\begin{ytableau}
1 & 3 & 5\\
2 & 4& 8\\
6 & 9 & 10\\
7 & 11 & 12\\
\end{ytableau}
}

&
\hspace{2cm}

&

\begin{tikzpicture}[thick, baseline=(current bounding box.center), xscale=0.5, yscale=.7]

\draw[thick] (.5,0) -- (12.5,0);
\foreach \i in {1,...,12} \node[below, font=\tiny] (\i) at (\i,0) {\i};
        \draw[blue, thick] (1) to[out=90,in=90] (2);
        \draw[blue, thick] (3) to[out=90,in=90] (4);
        \draw[blue, thick] (5) to[out=90,in=90] (8);
        \draw[red, thick] (2) to[out=90,in=90] (10);
        \draw[red, thick] (4) to[out=90,in=90] (6);
        \draw[red, thick] (8) to[out=90,in=90] (9);
        \draw[green, thick] (6) to[out=90,in=90] (7);
        \draw[green, thick] (9) to[out=90,in=90] (12);
        \draw[green, thick] (10) to[out=90,in=90] (11);

\node at (-1.5, 1.2) {$\mathcal{M}^T=$};
\end{tikzpicture}
\end{tabular}
    \caption{Example of Lemma~\ref{lemma: multicolored NCM from tableau}. The tableau $T$ on the left corresponds to the multicolored noncrossing matching $\mathcal{M}^T$ on the right, with $\mathcal{M}^T_1$ in blue, $\mathcal{M}^T_2$ in red, and $\mathcal{M}^T_3$ in green.  Label the four rows of the tableau from top to bottom. Reading left to right along row $2+1=3$, the first entry is $6$, so $6$ is matched with the rightmost entry in row $2$ that is less than $6$. This is entry $4$. Thus $\mathcal{M}^{T}_2$ (shown in red) contains the arc $(4,6)$.}
    \label{Fig:rectangluar T to NCM}
\end{figure}

When $T$ is a rectangular tableau,  the multicolored NCM $\mathcal{M}^T$ obtained from this process has a special property: each boundary point $i\in \{1, \ldots, N\}$ appears exactly once (if $i$ is the start of an arc in $\mathcal{M}_1$ or the end of an arc in $\mathcal{M}_{n-1}$) or twice (if $i$ is any other boundary point). More precisely, we have the following.

\begin{lemma} \label{lemma: rectangular tableau to multicolored NCM}
Let $T$ be a rectangular standard Young tableau with $N$ boxes and $n$ rows, and let $\mathcal{M}^T$ be the multicolored NCM obtained from $T$ by the algorithm in Lemma~\ref{lemma: multicolored NCM from tableau}.  Each arc $(i,j) \in \mathcal{M}_x$ satisfies:
\begin{enumerate}
    \item If $x>1$, the arc shares endpoint $i$ with exactly one other arc, and that arc is in $\mathcal{M}_{x-1}$.
    \item If $x<n-1$ the arc shares endpoint $j$ with exactly one other arc, and that arc is in $\mathcal{M}_{x+1}$.
    \item If $x=1$ then $i$ is on no other arc, and if $x=n-1$ then $j$ is on no other arc.
    \end{enumerate} 
\end{lemma}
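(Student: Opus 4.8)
The plan is to reduce the three claims to a single structural fact: for each color $x \in \{1, \ldots, n-1\}$, the matching $\mathcal{M}_x$ produced by the algorithm of Lemma~\ref{lemma: multicolored NCM from tableau} is a \emph{perfect} matching between the two consecutive rows, pairing every entry of row $x$ (which serves as the start $i$ of an arc) with every entry of row $x+1$ (which serves as the end $j$). Part of this is immediate from the construction: Step~(1) always draws $j$ from row $x+1$ and Step~(2) always draws $i$ from row $x$, and since $i<j$ the start of each arc of $\mathcal{M}_x$ lies in row $x$ and its end lies in row $x+1$. The only thing left to verify is that Step~(2) never stalls, i.e.\ that an unpaired entry of row $x$ that is smaller than $j$ is available at every stage. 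Notice that the noncrossing property from Lemma~\ref{lemma: multicolored NCM from tableau} is not needed here, since the present statement concerns only which boundary points are shared between arcs.

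To set up notation I write $T_{r,c}$ for the entry of $T$ in row $r$ and column $c$, so each row reads $T_{r,1}<T_{r,2}<\cdots<T_{r,k}$ and column-strictness gives $T_{r,c}<T_{r+1,c}$ for all $r,c$. Reading row $x+1$ left to right means the algorithm processes $j=T_{x+1,1},T_{x+1,2},\ldots,T_{x+1,k}$ in this order.

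The main step, and the only real obstacle, is to show that Step~(2) always succeeds; I would argue this by a counting estimate. When the algorithm reaches $j=T_{x+1,c}$, it has already paired $T_{x+1,1},\ldots,T_{x+1,c-1}$ with $c-1$ distinct entries of row $x$ (distinctness is forced by the ``not yet paired'' clause). On the other hand, column-strictness yields $T_{x,1}<T_{x,2}<\cdots<T_{x,c}<T_{x+1,c}=j$, so at least $c$ entries of row $x$ lie below $j$. Since at most $c-1$ entries of row $x$ have been used in total, at least one entry of row $x$ below $j$ remains unpaired, and Step~(2) selects the rightmost such entry. Consequently all $k$ entries of row $x+1$ get matched, and they consume $k$ distinct entries of row $x$, which therefore exhausts row $x$. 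This establishes that $\mathcal{M}_x$ is a perfect matching whose starts are exactly the entries of row $x$ and whose ends are exactly the entries of row $x+1$.

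With the perfect-matching fact in hand, the three claims follow by bookkeeping which rows feed which colors. A value $v$ in row $r$ is the start of an arc of $\mathcal{M}_r$ exactly when $r\leq n-1$, and the end of an arc of $\mathcal{M}_{r-1}$ exactly when $r\geq 2$; it appears in no other $\mathcal{M}_y$, since each $\mathcal{M}_y$ uses only entries of rows $y$ and $y+1$, forcing $y\in\{r-1,r\}$. Now take an arc $(i,j)\in\mathcal{M}_x$, so $i$ lies in row $x$ and $j$ lies in row $x+1$. If $x>1$ then $x\geq 2$, so $i$ is the end of exactly one arc of $\mathcal{M}_{x-1}$ and lies on no further arc, which is claim~(1); if $x<n-1$ then $x+1\leq n-1$, so $j$ is the start of exactly one arc of $\mathcal{M}_{x+1}$ and lies on no further arc, which is claim~(2). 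Finally, if $x=1$ then $i$ lies in row $1$, which is an end-row for no color, so $i$ lies on no arc besides $(i,j)$; and if $x=n-1$ then $j$ lies in row $n$, which is a start-row for no color, so $j$ lies on no arc besides $(i,j)$. This gives claim~(3) and completes the argument.
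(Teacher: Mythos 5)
Your proof is correct and takes essentially the same approach as the paper: both arguments reduce the lemma to the fact that each $\mathcal{M}_x$ is a perfect matching pairing the $k$ entries of row $x$ (as arc starts) with the $k$ entries of row $x+1$ (as arc ends), and then bookkeep which rows feed which colors. The only difference is that you explicitly verify, via the column-strictness inequality $T_{x,1}<\cdots<T_{x,c}<T_{x+1,c}$, that Step~(2) of the algorithm never stalls, a point the paper's proof asserts as holding ``by construction''; your version is, if anything, more complete on this detail.
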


\begin{proof}
By construction, each $\mathcal{M}_x$ has the same number of arcs as the number of boxes in the smaller of rows $x$ and $x+1$. Since $T$ is rectangular, every row has the same number of boxes, and this number is $k$. So every $\mathcal{M}_x$ has exactly $k$ arcs.

Also by construction, each number in row $x+1$ is used exactly once when constructing $\mathcal{M}_x$ and at most once when constructing $\mathcal{M}_{x+1}$. Since every row has the same number of boxes, this means each number in row $x+1$ is used exactly once when constructing $\mathcal{M}_{x+1}$. The only exceptions are at the extrema $x+1=1$ and $x+1=n$ since there is no $0^{th}$ row or $n^{th}$ matching.  The claim follows.
\end{proof}

The conditions in Lemma~\ref{lemma: rectangular tableau to multicolored NCM} are useful enough to warrant their own name.

\begin{definition} \label{def: standard rectangular matching}
A multicolored NCM $\mathcal{M}$ that satisfies (1)--(3) of Lemma~\ref{lemma: rectangular tableau to multicolored NCM} is called \emph{standard rectangular}.
\end{definition}

The argument in Lemma~\ref{lemma: rectangular tableau to multicolored NCM} is in fact reversible, meaning the map from standard Young tableaux to multicolored NCMs is bijective with the set of standard rectangular multicolored NCMs.

\subsection{Reflecting a multicolored NCM}
We now describe how to reflect a multicolored NCM.  The process is intuitive: we reflect over the vertical line $x=\frac{N+1}{2}$ and recolor the arcs. 

\begin{definition} \label{definition: reflecting multicolored NCM}
Suppose $(i,j)$ is an arc with color $x$.  Its \emph{reflection} is the arc $(N+1 - j, N+1-i)$ with color $n - x$.  The \emph{reflection} of the multicolored NCM $\mathcal{M}$ is denoted $\varphi(\mathcal{M})$ and defined as the multicolored NCM obtained by reflecting each arc in $\mathcal{M}$.
\end{definition}

Observe that the arc $(N+1-j, N+1-i)$ is the reflection of $(i,j)$ across the line $x=\frac{N+1}{2}$.

\begin{figure}[h]
     \begin{tikzpicture}[xscale=0.6, yscale=.6]

        \draw[thick] (.5,0) -- (12.5,0);
        \foreach \i in {1,...,12} \node[below, font=\tiny] (\i) at (\i,0) {\i};
        \draw[blue, thick] (1) to[out=90,in=90] (2);
        \draw[blue, thick] (3) to[out=90,in=90] (4);
        \draw[blue, thick] (5) to[out=90,in=90] (8);
        \draw[red, thick] (2) to[out=90,in=90] (10);
        \draw[red, thick] (4) to[out=90,in=90] (6);
        \draw[red, thick] (8) to[out=90,in=90] (9);
        \draw[green, thick] (6) to[out=90,in=90] (7);
        \draw[green, thick] (9) to[out=90,in=90] (12);
        \draw[green, thick] (10) to[out=90,in=90] (11);

    \node at (6.5,-1) {$\mathcal{M}$};

\end{tikzpicture} \hspace{0.5in}
    \begin{tikzpicture}[xscale=0.6,yscale=.7]
\draw[thick] (.5,0) -- (12.5,0);
        \foreach \i in {1,...,12} \node[below, font=\tiny] (\i) at (\i,0) {\i};
        \draw[blue, thick] (1) to[out=90,in=90] (4);
        \draw[blue, thick] (2) to[out=90,in=90] (3);
        \draw[blue, thick] (6) to[out=90,in=90] (7);
        \draw[red, thick] (3) to[out=90,in=90] (11);
        \draw[red, thick] (4) to[out=90,in=90] (5);
        \draw[red, thick] (7) to[out=90,in=90] (9);
        \draw[green, thick] (5) to[out=90,in=90] (8);
        \draw[green, thick] (9) to[out=90,in=90] (10);
        \draw[green, thick] (11) to[out=90,in=90] (12);

\node at (6.5,-1) {$\varphi(\mathcal{M})$};
\end{tikzpicture}

\caption{Example of Definition~\ref{definition: reflecting multicolored NCM}, in which $n=4$, $k=3$ and $N=12$, using blue for $\mathcal{M}_1$, red for $\mathcal{M}_2$, and green for $\mathcal{M}_3$. The reflection of arc $(1,2)$ is given by the map: $$(1,2) \mapsto (12+1-2, 12+1-1) = (11,12).$$ Arc $(1,2)\in \mathcal{M}$ is blue (color $1$) so its reflection $(11,12)\in \varphi(\mathcal{M)}$ is color $4-1=3$, or green.} 
\label{FIG: mulitcolored NCM and reflected Multicolored NCM}
\end{figure}

\section{$\mathfrak{sl}_n$ webs and Reflection} \label{SEC:webs and reflection}

We first recall Fontaine's definition of $\mathfrak{sl}_n$ web graphs (see also \cite{RT2025}). We then extend the algorithm described in \cite{Tymoczko2012} to build webs from standard Young tableaux,  and finally define the operation of reflection on the resulting web graphs.

\begin{definition}
    An $\mathfrak{sl}_n$-web is a planar, directed, edge-weighted graph with a boundary such that:
    \begin{itemize}
        \item every boundary vertex is univalent,
        \item every interior vertex is trivalent,
        \item every edge $e$ has a wedge weight $\ell(e) \in \{1,2,\ldots, n-1\}$; and
        \item and at each interior vertex $v$, the weights of the edges incident to $v$ \emph{preserve flow mod $n$}, meaning
        \[\sum_{\textup{ edges into }v} \ell(e) - \sum_{\textup{ edges out of }v} \ell(e) \equiv 0 \mod n.\]
    \end{itemize}
\end{definition}

In the literature, the integer assigned to a given edge is often referred to as an \textit{edge label}. To avoid ambiguity, we instead use the term \textit{\bf edge weight} for the integer assigned to a directed edge in a web graph and use the word \textit{\bf color} to refer to the integer assigned to a given arc in a multicolored NCM.

\subsection{Constructing a web $w$ from a multicolored NCM $\mathcal{M}$}

We describe a function that produces a web graph $w$ from a multicolored NCM $\mathcal{M}$.  The first two steps create the underlying graph, extending the algorithm in, e.g., \cite{Tymoczko2012} or \cite{RT2025}. The final steps direct and weight the edges. The proof is analogous to that of \cite{Tymoczko2012}, so we only sketch it.

\begin{lemma} \label{lemma: web from NCM algorithm}
Fix a positive number $m$. Suppose $\mathcal{M}$ is a standard rectangular multicolored NCM, as in Definition~\ref{def: standard rectangular matching}.  Draw each arc in $\mathcal{M}$ as a triangle formed by line segments of slopes $m$ and $-m$ above the $x$-axis in the plane.  The following algorithm produces a web graph $\widetilde{w_\mathcal{M}}$.
\begin{enumerate}
    \item {\bf At each boundary vertex $j$ where two arcs $(i, j)$ and $(j, k)$ are incident:} add a $Y$ shape, comprised of a single vertical edge at the vertex $j$ whose weight is $1$, and a new interior vertex from which the two arcs diverge. (See Figure~\ref{inserty}.) 
    
    \begin{figure}[h]
   \begin{tikzpicture}
    \draw[thick] (1,0) -- (3,0);
    \draw[blue,thick,] (1.5,1) --(2,0);
    \draw[blue,thick,-<] (2,0) -- (1.75,.5);
    \draw[red,thick,] (2,0) -- (2.5,1); 
    \draw[red,thick, ->] (2,0) -- (2.25,.5) ;
    
    \draw[thick][->] (3.7,.5) -- (4.3,.5);
    \draw[thick] (5,0) -- (7,0);
    \draw[thick][->] (6,0) -- (6,.35);
    \draw[thick][-] (6,.33) -- (6,.5);
    \draw[thick] (5.5,1) -- (6,.5);
    \draw[thick,->] (5.5,1)  -- (5.75,.75);
    \draw[thick] (6,.5) -- (6.5,1);
     \draw[thick,->] (6,.5) -- (6.325,.825);
    \node at (5.7,.27) {1};
   \end{tikzpicture}
       \caption{Y replacement with blue arc $(i,j)\in\mathcal{M}_x$ and red arc $(j,k)\in\mathcal{M}_{x+1}$.}     \label{inserty}

   \end{figure}
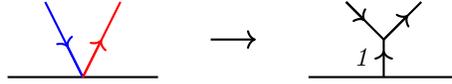

    \item {\bf At each interior vertex $v$ where two arcs $(i,j)$ and $(i',j')$ cross:} replace with a dumbbell, or \textit{I} shape, comprised of a new interior vertex $v'$ and a vertical edge $vv'$ between the two vertices. To decide which arc segments are on top and bottom, suppose $i < i' < j < j'$, or equivalently a clockwise trajectory along arc $(i,j)$ moves \emph{into} $(i',j')$ while a clockwise trajectory along $(i',j')$ moves \emph{out of} $(i,j)$. Then vertex $v$ is incident to edges on the arc segments to $i$ and $j'$ while $v'$ is incident to edges on the arc segments to $i'$ and $j$.  (See Figure~\ref{dumbell}.)
    
       \begin{figure}[h]
   \begin{tikzpicture}[scale=.8]
   
    \draw[red,thick][->] (0,0) -- (1.5,1.5);
    \draw[blue,thick][->] (0,1.5) -- (1.5,0);
    \node[blue, scale=.9] at (-.25,1.75) {$i$};
    \node[blue, scale=.9] at (1.75,0) {$j$};
    \node[red, scale=.9] at (-.25,0) {$i'$};
    \node[red, scale=.9] at (1.85,1.75) {$j'$};
    
    \draw[thick, scale=1.4][->] (2,.5) -- (2.6,.5);
    
    \begin{scope}[xshift=2cm,]
    
     \draw[thick][->] (4,.5) -- (4,.8);
     
    \draw[thick] (4,.75) -- (4,1);
    
    \draw[thick] (4,1) -- (4.5,1.5);
    \draw[thick][->] (4,1) -- (4.325,1.325);
    \draw[thick] (3.5,1.5) -- (4,1);
    \draw[thick][->] (3.5,1.5) --(3.75,1.25);
    
    \draw[thick][->] (3.5,0) -- (3.75,.25);
    
    \draw[thick] (3.5,0) -- (4,.5);
    
    \draw[thick] (4,.5) -- (4.5,0);
    \draw[thick][->] (4,.5) -- (4.325,.175);
    
    \node[scale=.9] at (3.2,0) {$x'$};
    \node[scale=.9] at (4.85,.8) {$x'-x$};
    \node[scale=.9] at (3.2,1.6) {$x$};
    \node[scale=.9] at (4.8,1.6) {$x'$};
    \node[scale=.9] at (4.8,0) {$x$};
    \end{scope}
    \end{tikzpicture}
    \caption{Dumbbell crossing of blue arc $(i, j) \in \mathcal{M}_x$ and red arc $(i', j') \in \mathcal{M}_{x'}$, with $i<i'<j<j'$ and $x' > x$.}    
    \label{dumbell}
\end{figure}
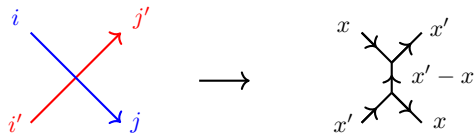
    \item {\bf Directing and weighting edges created from a single arc:} each edge $e$ created from a single arc should be directed consistent with the clockwise direction of that arc and weighted $\ell(e)=x$ if the arc is in matching $\mathcal{M}_x$. 
    \item {\bf Directing and weighting new edges created from two arcs:} Each new edge created in a Y or dumbbell can be thought of as having exactly two arcs running over it, in opposite directions relative to the edge if following a clockwise trajectory along each arc. Suppose one arc is in $\mathcal{M}_x$ and the other is in $\mathcal{M}_{x'}$, and without loss of generality assume $x<x'$.  Then direct the edge consistent with the clockwise trajectory along the arc in $\mathcal{M}_{x'}$ and weight the edge with $x'-x$. This means in Figure~\ref{inserty} the inserted vertical edge is always weighted $(x+1)-x=1$, and in Figure~\ref{dumbell} the inserted vertical edge weight $\ell(vv')$ is given by $ \ell(vv')= x'-x $.
\end{enumerate}
\end{lemma}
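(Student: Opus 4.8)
The plan is to verify directly that the output $\widetilde{w_\mathcal{M}}$ of the four-step algorithm satisfies each clause in the definition of an $\mathfrak{sl}_n$-web: planarity, univalence of boundary vertices, trivalence of interior vertices, edge weights lying in $\{1,\dots,n-1\}$, and conservation of flow mod $n$ at every interior vertex. Of these, only the last is substantive; the others are local consequences of the construction together with the standard rectangular hypothesis, so I would dispose of them first and then concentrate on flow conservation.

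For the structural clauses I would argue as follows. Since each $\mathcal{M}_x$ is individually noncrossing, crossings occur only between arcs of distinct colors, and after fixing $m$ (perturbing the picture slightly if three segments happen to be concurrent, which does not change the resulting combinatorial graph) every crossing is a transverse double point replaced locally by a dumbbell; resolving all crossings yields a graph embedded in the plane without crossings, hence planar. For the boundary I would invoke Lemma~\ref{lemma: rectangular tableau to multicolored NCM}: in a standard rectangular NCM each boundary point is the endpoint of at most two arcs, and when it lies on two, these are an arc of $\mathcal{M}_x$ ending there and an arc of $\mathcal{M}_{x+1}$ starting there. Thus every boundary vertex is either the foot of a single arc (one incident edge) or the site of a $Y$ replacement (one incident weight-$1$ vertical edge), so it is univalent. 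Trivalence of each new interior vertex is immediate from Figures~\ref{inserty} and~\ref{dumbell}: a $Y$ vertex meets the vertical edge together with the two diverging arc segments, and each dumbbell vertex meets the shared vertical edge together with two arc segments. Finally the weight bounds are arithmetic: single-arc edges carry weight $x\in\{1,\dots,n-1\}$ by Step 3, a $Y$ vertical edge carries weight $1$, and a dumbbell vertical edge carries weight $x'-x$ with $1\le x<x'\le n-1$, so $1\le x'-x\le n-2$, by Step 4.

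The heart of the argument is flow conservation, and the key preliminary observation I would isolate is that the two arcs running over any inserted vertical edge traverse it in \emph{opposite} directions relative to the clockwise orientation. At a $Y$ this is geometric: the arc of $\mathcal{M}_x$ descends into the boundary point $j$ while the arc of $\mathcal{M}_{x+1}$ ascends out of $j$; at a dumbbell with $i<i'<j<j'$ the clockwise trajectory of $(i,j)$ runs $v\to v'$ down the vertical edge while that of $(i',j')$ runs $v'\to v$ up it, so the single directed edge of weight $x'-x$ produced by Step 4 records the net flow. With this in hand the two local checks reduce to short computations. At a $Y$ vertex the incoming edges carry weights $x$ (the lower arc) and $1$ (the vertical edge, directed along the higher arc of $\mathcal{M}_{x+1}$) while the outgoing edge carries weight $x+1$, giving $(x+1)-(x+1)=0$. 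At the upper dumbbell vertex $v$ the incoming weights are $x$ (the segment to $i$) and $x'-x$ (the vertical edge), and the outgoing weight is $x'$ (the segment to $j'$), so $\bigl(x+(x'-x)\bigr)-x'=0$; the lower vertex $v'$ is symmetric. Each balance is exactly $0$, hence $\equiv 0 \pmod{n}$.

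I expect the main obstacle to be the bookkeeping of induced directions on the shared vertical edges at dumbbell crossings, together with establishing the opposite-direction claim in full generality: once the direction carried by each arc over an inserted edge is pinned down correctly, the weight $x'-x$ assigned in Step 4 is forced and conservation follows automatically. A secondary point demanding care is well-definedness—I would need to check that the resulting graph does not depend on the order in which crossings are resolved and that the drawing can be taken free of concurrent crossings, so that the algorithm outputs a single unambiguous graph. Since Figures~\ref{inserty} and~\ref{dumbell} exhaust the local configurations that can occur at boundary meetings and at crossings, assembling these local verifications shows that $\widetilde{w_\mathcal{M}}$ satisfies every defining property of an $\mathfrak{sl}_n$-web, as claimed.
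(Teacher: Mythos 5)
Your proposal is correct and follows essentially the same route as the paper: establish planarity and the vertex degrees, then verify flow conservation locally at the $Y$ and dumbbell vertices using the observation that the two arcs over each inserted edge run in opposite directions, so the weight $x'-x$ directed along the higher-colored arc balances the flow. The one point where the paper is tighter is the well-definedness issue you flag but leave open: since in a standard rectangular NCM each boundary point starts at most one arc and ends at most one arc, the slope-$m$ (respectively slope-$-m$) segments of distinct arcs lie on distinct parallel lines, so any point lies on at most one segment of each slope and hence on at most two arcs---no three arcs can be concurrent and no perturbation is needed.
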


\begin{proof}
    Note that if each arc $(i,j)$ is drawn as a triangle with vertices $(i,0)$, $(j,0)$, and $(\frac{j-i}{2}, \frac{m(j-i)}{2})$, then two arcs intersect in at most one point.  Moreover, if two arcs intersect in a point, then this intersection is on the edge of slope $m$ for one of the arcs and $-m$ for the other. Consequently, no point lies on the intersection of more than two arcs.  Finally, no arcs in $\mathcal{M}_x$ intersect each other.  Each step of the algorithm produces a local change on the graph, restricted to a small neighborhood, so the graph $\widetilde{w_{\mathcal{M}}}$ is planar.  
    
    We now confirm that every vertex of $\widetilde{w_{\mathcal{M}}}$ has the appropriate degree. Every point on the intersection of two arcs in $\mathcal{M}$ is either a boundary point or an interior point. An interior intersection point in $\mathcal{M}$ becomes two trivalent vertices in $\widetilde{w_{\mathcal{M}}}$, while a boundary intersection point becomes one trivalent vertex incident to a boundary edge in $\widetilde{w_{\mathcal{M}}}$.  In both cases, the resulting graph $\widetilde{w_{\mathcal{M}}}$ only has trivalent interior vertices and univalent boundary vertices.

    Finally, each interior vertex $v$ was constructed incident to one edge with one arc entering and another edge with a different arc exiting (both along their clockwise trajectories), plus a third edge with both of those arcs in opposite directions.  Suppose without loss of generality that the arc entering $v$ is in $\mathcal{M}_x$ and the arc exiting $v$ is in $\mathcal{M}_{x'}$.  Then the corresponding edges in $\widetilde{w_{\mathcal{M}}}$ are weighted $x$ and oriented into $v$, respectively weighted $x'$ and oriented out.  Thus, their contribution to the flow modulo $n$ at $v$ is $x-x'$.  If $x < x'$ then the new edge is oriented into $v$ and weighted $x'-x$, while if $x > x'$ then the new edge is oriented out of $v$ and weighted $x-x'$.  In both cases, the flow modulo $n$ is zero, as desired.  So $\widetilde{w_{\mathcal{M}}}$ is in fact a web graph.
\end{proof}

One could extend Lemma~\ref{lemma: web from NCM algorithm} to more general $\mathcal{M}$, allowing boundary edges of arbitrary weight (with fewer conditions on arcs containing a boundary point $i$), and allowing multiple arcs to cross simultaneously. To prove that the output is still a web graph requires the machinery of stranding \cite{RT2025}, and in particular for certain arcs to run counterclockwise subject to careful constraints.

    It is customary to use the convention that boundary vertices in a web graph are sources.  Webs are considered \emph{equivalent} under the equivalence relation defined---for any particular edge---by simultaneously flipping the direction and changing the edge weight from $x$ to $n-x$.  (See \cite{RT2025} for a complete set of web graph equivalence relations.) Using this equivalence for the web graphs $\widetilde{w_{\mathcal{M}}}$ gives the following.

    \begin{corollary} \label{corollary: webs from tableau in standard form}
   Suppose $\mathcal{M}$ is a standard rectangular multicolored NCM and let $\widetilde{w_{\mathcal{M}}}$ be its associated web graph.  Denote by $w_{\mathcal{M}}$ the graph obtained from $\widetilde{w_{\mathcal{M}}}$ by, for each boundary vertex $u$,  replacing the edge $uv$ by an edge directed from $u$ to $v$ weighted $1$.
   
   Then $w_\mathcal{M}$ is a web graph equivalent to $\widetilde{w_{\mathcal{M}}}$ in the sense that it is obtained from $\widetilde{w_{\mathcal{M}}}$ by reversing some edge directions and replacing the weight $\ell(e)$ on each flipped edge $e$ by $n-\ell(e)$.  If $\mathcal{M}^T$ is associated to a standard Young tableau $T$ then we denote the web graph $w_{\mathcal{M}^T}$ by $w_T$.
    \end{corollary}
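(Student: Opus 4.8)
The plan is to examine each boundary edge of $\widetilde{w_\mathcal{M}}$ individually, decide whether it is already a source of weight $1$, and apply the edge-flip equivalence only where it is not. The first ingredient I would record is that the web equivalence relation preserves the web axioms: if an edge $e$ incident to a vertex $v$ is reversed and its weight $x$ is replaced by $n-x$, then the signed contribution $\pm x$ of $e$ to the flow sum at $v$ is replaced by $\mp(n-x)$, which differs from the original by a multiple of $n$. Hence the flow-preservation condition mod $n$ is unaffected at both endpoints of $e$, and any graph obtained from $\widetilde{w_\mathcal{M}}$ by flipping a set of edges is again a web graph equivalent to $\widetilde{w_\mathcal{M}}$.

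Next I would classify the boundary edges of $\widetilde{w_\mathcal{M}}$ using the standard rectangular structure of $\mathcal{M}$ recorded in Lemma~\ref{lemma: rectangular tableau to multicolored NCM}. Each boundary point $p$ falls into exactly one of three cases: (a) $p$ is the start of an arc in $\mathcal{M}_1$ and lies on no other arc; (b) $p$ is the end of an arc in $\mathcal{M}_{n-1}$ and lies on no other arc; or (c) $p$ is simultaneously the end of an arc in $\mathcal{M}_{x-1}$ and the start of an arc in $\mathcal{M}_x$ for some $2 \le x \le n-1$. Recalling that the clockwise trajectory along an arc $(i,j)$ runs from the left endpoint $i$ up and over to the right endpoint $j$, I would then read off the direction and weight at $p$ directly from Steps 1, 3, and 4 of Lemma~\ref{lemma: web from NCM algorithm}.

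In case (a) the boundary edge comes from a single arc of color $1$; by Step 3 it is weighted $1$ and directed clockwise, hence out of $p$, so it is already a source of weight $1$. In case (c) the $Y$-replacement of Step 1 produces a vertical boundary edge, and by Step 4 (with colors $x-1 < x$) this edge is weighted $x-(x-1)=1$ and directed consistently with the arc in $\mathcal{M}_x$, that is, out of $p$; so again it is a source of weight $1$. In case (b) the boundary edge comes from a single arc of color $n-1$; by Step 3 it is weighted $n-1$ and directed clockwise into $p$, so it is a sink of weight $n-1$. Thus the edges in cases (a) and (c) already match the prescription defining $w_\mathcal{M}$, while the case-(b) edges are precisely the ones that must be altered.

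Finally I would apply the flip relation to each case-(b) boundary edge: directed into $u$ with weight $n-1$, it becomes directed out of $u$ with weight $n-(n-1)=1$, exactly the source of weight $1$ demanded by the definition of $w_\mathcal{M}$, while the case-(a) and case-(c) edges are left unchanged. Hence $w_\mathcal{M}$ is obtained from $\widetilde{w_\mathcal{M}}$ by flipping exactly the case-(b) boundary edges, and by the first paragraph it is a web graph equivalent to $\widetilde{w_\mathcal{M}}$; the final sentence setting $w_T = w_{\mathcal{M}^T}$ is then just a definition. I expect the only delicate point to be bookkeeping the clockwise-orientation convention correctly, so that the terminal endpoints of $\mathcal{M}_{n-1}$-arcs are the unique sinks of weight $n-1$; once that convention is pinned down, the rest of the argument is immediate.
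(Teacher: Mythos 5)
Your proposal is correct and follows essentially the same route as the paper: both classify the boundary edges into the same three cases via the standard rectangular structure, observe that the $\mathcal{M}_1$-start and $Y$-replacement edges are already sources of weight $1$, and flip only the $\mathcal{M}_{n-1}$-end edges from weight $n-1$ into weight $n-(n-1)=1$. Your additional preliminary check that the flip relation preserves the flow condition mod $n$ is a small bonus the paper leaves implicit, but it does not change the argument.
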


    \begin{proof}
        By Lemma~\ref{lemma: rectangular tableau to multicolored NCM}, each boundary point $1, 2, \ldots, N$ appears exactly once in $\mathcal{M}^T$ (if it starts an arc in $\mathcal{M}_1$ or ends an arc in $\mathcal{M}_{n-1}$) or twice (if any other boundary point), and the end point of each arc in $\mathcal{M}_x$ is the start of exactly one arc in $\mathcal{M}_{x+1}$ for $x=1,2,\ldots,n-2$. Hence, we have three kinds of boundary edges: edges which start an arc in $\mathcal{M}_1$, boundary edges in a Y replacement, and edges which end an arc in $\mathcal{M}_{n-1}$. We address each of these three cases separately.

        First, suppose the boundary edge is the start of an arc in $\mathcal{M}_1$.  In this case, the edge is directed up because of the clockwise convention, and weighted $1$ because the arc is in $\mathcal{M}_1$.  So these boundary edges are the same in $w_T$ as in $w_{\mathcal{M}^T}$. 
        
        Second, suppose the boundary vertex lies on one arc in $\mathcal{M}_x$ and one arc in $\mathcal{M}_{x+1}$.  Since arcs are directed clockwise, the arc from $\mathcal{M}_x$ is directed down into the boundary and the arc from $\mathcal{M}_{x+1}$ is directed up out of the boundary. By the algorithm in Lemma~\ref{lemma: web from NCM algorithm}, the boundary edge of the Y is directed with the arc whose color is a larger number, so it is directed out of the boundary with the arc in $\mathcal{M}_{x+1}$, and its weight is the difference $(x+1)-x = 1$ as desired. So these boundary edges, too, are the same in $w_T$ as in $w_{\mathcal{M}^T}$. 
         
        Third and finally, suppose the boundary edge is the end of an arc in $\mathcal{M}_{n-1}$. Again by convention, the end of each arc in $\mathcal{M}_{n-1}$ is directed down and weighted $n-1$. Flip these edges and replace the weight $n-1$ by $n-(n-1)=1$. These are the only edges that differ between $w_T$ and $w_{\mathcal{M}^T}$ and they are related according to the desired equivalence relation.  This proves the claim.
    \end{proof}

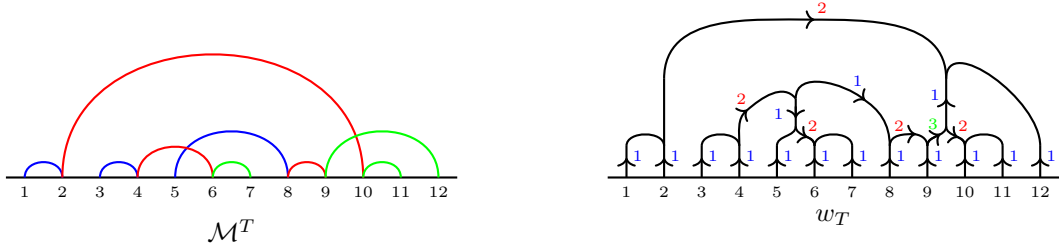
\begin{figure}[h]
     \begin{tikzpicture}[xscale=.5, yscale=.7]
\draw[thick] (.5,0) -- (12.5,0);
        \foreach \i in {1,...,12} \node[below, font=\tiny] (\i) at (\i,0) {\i};
        \draw[blue, thick] (1) to[out=90,in=90] (2);
        \draw[blue, thick] (3) to[out=90,in=90] (4);
        \draw[blue, thick] (5) to[out=90,in=90] (8);
        \draw[red, thick] (2) to[out=90,in=90] (10);
        \draw[red, thick] (4) to[out=90,in=90] (6);
        \draw[red, thick] (8) to[out=90,in=90] (9);
        \draw[green, thick] (6) to[out=90,in=90] (7);
        \draw[green, thick] (9) to[out=90,in=90] (12);
        \draw[green, thick] (10) to[out=90,in=90] (11);

   \node at (6.5,-1) {$\mathcal{M}^T$};

\begin{scope}[yscale=.75, xshift=16cm]
        \draw[thick] (.5,0) -- (12.5,0);
         \foreach \i in {1,...,12}
            \node[below, font=\tiny] (\i) at (\i,0) {\i};
        \foreach \i in {1,...,12}
            \draw[thick] (\i,.4) -- (\i,.8);
        \foreach \i in {1,...,12}
        \draw[thick,->] (\i,0) -- (\i,.5);

        \draw[thick] (2,.8) -- (2,2.5);

        \draw[thick] (1,.8) to[out=90,in=90] (2,.8);
        \draw[thick, ->] (2,2.5) to[out=90,in=180] (6,4);
        \draw[thick] (5.9,4) to[out=0,in=90] (9.5,2.5);
        
        \draw[thick, ->] (9.5,1.3) -- (9.5,2);
        \draw[thick] (9.5,1.9) -- (9.5,2.5);

        \draw[thick, -<] (10,.8) to[out=90,in=340] (9.6,1.095);
        \draw[thick] (10,.8) to[out=90,in=-90] (9.5,1.3);
        
        \draw[ thick] (3,.8) to[out=90,in=90] (4,.8);
        \draw[ thick] (6,.8) to[out=90,in=90] (7,.8);

        \draw[thick, <-] (5.5,1.5) -- (5.5,1.8);
        \draw[thick] (5.5,1.3) -- (5.5,2);

        \draw[ thick] (4.2, 1.7) to[out=40,in=130] (5.5,2);
        
        \draw[ thick, ->] (4, .8) to[out=90,in=220] (4.22,1.72);
        
        \draw[thick] (5.5,1.3) to[out=-90,in=90] (6,.8);
        \draw[thick , -<] (6,.8) to[out=90,in=340] (5.6,1.095);

        \draw[thick] (5,.8) to[out=90,in=-90] (5.5,1.3);
        \draw[thick] (7.25,2.01) to[out=-30,in=90] (8,.8);
        \draw[thick, ->] (5.5,2) to[out=90,in=142] (7.35,1.95);

         \draw[thick] (9.5,2.5) to[out=90,in=90] (12,.8);
         \draw[ thick] (10,.8) to[out=90,in=90] (11,.8);

         \draw[-> , thick] (8,.8) to[out=90,in=180] (8.7,1.1);
         \draw[ thick] (8.6,1.1) to[out=0,in=90] (9,.8);

         \draw[thick, ->] (9,.8) to[out=90,in=210]  (9.35,1.105);
        \draw[thick] (9,.8) to[out=90,in=-90] (9.5,1.3);

         \node[blue, font=\tiny] at (1.3,.55) {$1$};
         \node[blue, font=\tiny] at (3.3,.55) {$1$};
         \node[blue, font=\tiny] at (5.3,.55) {$1$};
         \node[blue, font=\tiny] at (2.3,.55) {$1$};
         \node[blue, font=\tiny] at (4.3,.55) {$1$};
         \node[blue, font=\tiny] at (6.3,.55) {$1$};
         \node[blue, font=\tiny] at (7.3,.55) {$1$};
         \node[blue, font=\tiny] at (9.3,.55) {$1$};
         \node[blue, font=\tiny] at (10.3,.55) {$1$};
         \node[blue, font=\tiny] at (11.3,.55) {$1$};
         \node[blue, font=\tiny] at (12.3,.55) {$1$};
         \node[blue, font=\tiny] at (8.3,.55) {$1$};
        \node[blue, font=\tiny] at (7.15,2.45) {$1$};

         \node[red, font=\tiny] at (6.2,4.3) {$2$};
         \node[red, font=\tiny] at (4.05,2.0) {$2$};
         \node[red, font=\tiny] at (8.25,1.3) {$2$};
         \node[red, font=\tiny] at (5.95,1.3) {$2$};

         \node[red, font=\tiny] at (9.95,1.3) {$2$};
         \node[green, font=\tiny] at (9.15,1.35) {$3$};

         \node[blue, font=\tiny] at (5.05,1.6) {$1$};
         \node[blue, font=\tiny] at (9.2,2.1) {$1$};

         \node at (6.5,-1) {$w_T$};

\end{scope}
\end{tikzpicture}

\caption{Example of Corollary~\ref{corollary: webs from tableau in standard form}. The multicolored NCM $\mathcal{M}^T$ on the left was obtained from $T$ in Figure~\ref{Fig:rectangluar T to NCM}. On the right, the web $w_T$ is obtained from $T$ via $\mathcal{M}^T$ according to Lemma~\ref{lemma: web from NCM algorithm} and Corollary~\ref {corollary: webs from tableau in standard form}.} 
\label{Fig: multicored NCM to web}
\end{figure}

\subsection{Reflecting a web}

We now define what it means to reflect a web, which is simply reflecting over the line $x = \frac{N+1}{2}$. Lemma~\ref{lemma: reflected web is a web}  confirms that the reflection of a web graph is itself a web graph. 

\begin{definition} \label{definition: reflection of a web}
Suppose that $w$ is a (Fontaine) web for $\mathfrak{sl}_n$.  Define the \emph{reflection} of $w$ to be the graph $\varphi(w)$ obtained by reflecting over the line $x = \frac{N+1}{2}$.  
\end{definition}

\begin{figure}[h]
\begin{tikzpicture}[scale=0.6]
\begin{scope}[xshift=16cm]
    \draw[dashed, red] (6.5,-.5) -- (6.5,4.5);
        \draw[thick] (.5,0) -- (12.5,0);
         \foreach \i in {1,...,12}
            \node[below, font=\tiny] (\i) at (\i,0) {\i};
        \foreach \i in {1,...,12}
            \draw[thick] (\i,.4) -- (\i,.8);
        \foreach \i in {1,...,12}
        \draw[thick,->] (\i,0) -- (\i,.5);

        \draw[thick] (2,.8) -- (2,2.5);

        \draw[thick] (1,.8) to[out=90,in=90] (2,.8);
        \draw[thick, ->] (2,2.5) to[out=90,in=180] (6,4);
        \draw[thick] (5.9,4) to[out=0,in=90] (9.5,2.5);
        
        \draw[thick, ->] (9.5,1.3) -- (9.5,2);
        \draw[thick] (9.5,1.9) -- (9.5,2.5);

        \draw[thick, -<] (10,.8) to[out=90,in=340] (9.6,1.095);
        \draw[thick] (10,.8) to[out=90,in=-90] (9.5,1.3);
        
        \draw[ thick] (3,.8) to[out=90,in=90] (4,.8);
        \draw[ thick] (6,.8) to[out=90,in=90] (7,.8);

        \draw[thick, <-] (5.5,1.5) -- (5.5,1.8);
        \draw[thick] (5.5,1.3) -- (5.5,2);

        \draw[ thick] (4.2, 1.7) to[out=40,in=130] (5.5,2);
        
        \draw[ thick, ->] (4, .8) to[out=90,in=220] (4.22,1.72);
        
        \draw[thick] (5.5,1.3) to[out=-90,in=90] (6,.8);
        \draw[thick , -<] (6,.8) to[out=90,in=340] (5.6,1.095);

        \draw[thick] (5,.8) to[out=90,in=-90] (5.5,1.3);
        \draw[thick] (7.25,2.01) to[out=-30,in=90] (8,.8);
        \draw[thick, ->] (5.5,2) to[out=90,in=142] (7.35,1.95);

         \draw[thick] (9.5,2.5) to[out=90,in=90] (12,.8);
         \draw[ thick] (10,.8) to[out=90,in=90] (11,.8);

         \draw[-> , thick] (8,.8) to[out=90,in=180] (8.7,1.1);
         \draw[ thick] (8.6,1.1) to[out=0,in=90] (9,.8);

         \draw[thick, ->] (9,.8) to[out=90,in=210]  (9.35,1.105);
        \draw[thick] (9,.8) to[out=90,in=-90] (9.5,1.3);

         \node[blue, font=\tiny] at (1.3,.55) {$1$};
         \node[blue, font=\tiny] at (3.3,.55) {$1$};
         \node[blue, font=\tiny] at (5.3,.55) {$1$};
         \node[blue, font=\tiny] at (2.3,.55) {$1$};
         \node[blue, font=\tiny] at (4.3,.55) {$1$};
         \node[blue, font=\tiny] at (6.3,.55) {$1$};
         \node[blue, font=\tiny] at (7.3,.55) {$1$};
         \node[blue, font=\tiny] at (9.3,.55) {$1$};
         \node[blue, font=\tiny] at (10.3,.55) {$1$};
         \node[blue, font=\tiny] at (11.3,.55) {$1$};
         \node[blue, font=\tiny] at (12.3,.55) {$1$};
         \node[blue, font=\tiny] at (8.3,.55) {$1$};
        \node[blue, font=\tiny] at (7.15,2.45) {$1$};

         \node[red, font=\tiny] at (6.2,4.3) {$2$};
         \node[red, font=\tiny] at (4.05,2.0) {$2$};
          \node[red, font=\tiny] at (8.25,1.3) {$2$};
        \node[red, font=\tiny] at (5.95,1.3) {$2$};

         \node[red, font=\tiny] at (9.9,1.35) {$2$};
         \node[green, font=\tiny] at (9.2,1.35) {$3$};

         \node[blue, font=\tiny] at (5.05,1.6) {$1$};
         \node[blue, font=\tiny] at (9.2,2.1) {$1$};

         \node at (6.5,-1) {$w_T$};

\end{scope}
\end{tikzpicture} \hspace{0.25in}
\raisebox{-.2em}{\begin{tikzpicture}[xscale=-1, scale=0.6]
\begin{scope}[xshift=16cm]
    \draw[dashed, red] (6.5,-.5) -- (6.5,4.5);
        \draw[thick] (.5,0) -- (12.5,0);
         \foreach \i in {1,...,12}
            \node[below, font=\tiny] (\i) at (13-\i,0) {\i};
        \foreach \i in {1,...,12}
            \draw[thick] (\i,.4) -- (\i,.8);
        \foreach \i in {1,...,12}
        \draw[thick,->] (\i,0) -- (\i,.5);

        \draw[thick] (2,.8) -- (2,2.5);

        \draw[thick] (1,.8) to[out=90,in=90] (2,.8);
        \draw[thick, ->] (2,2.5) to[out=90,in=180] (6,4);
        \draw[thick] (5.9,4) to[out=0,in=90] (9.5,2.5);
        
        \draw[thick, ->] (9.5,1.3) -- (9.5,2);
        \draw[thick] (9.5,1.9) -- (9.5,2.5);

        \draw[thick, -<] (10,.8) to[out=90,in=340] (9.6,1.095);
        \draw[thick] (10,.8) to[out=90,in=-90] (9.5,1.3);
        
        \draw[ thick] (3,.8) to[out=90,in=90] (4,.8);
        \draw[ thick] (6,.8) to[out=90,in=90] (7,.8);

        \draw[thick, <-] (5.5,1.5) -- (5.5,1.8);
        \draw[thick] (5.5,1.3) -- (5.5,2);

        \draw[ thick] (4.2, 1.7) to[out=40,in=130] (5.5,2);
        
        \draw[ thick, ->] (4, .8) to[out=90,in=220] (4.22,1.72);
        
        \draw[thick] (5.5,1.3) to[out=-90,in=90] (6,.8);
        \draw[thick , -<] (6,.8) to[out=90,in=340] (5.6,1.095);

        \draw[thick] (5,.8) to[out=90,in=-90] (5.5,1.3);
        \draw[thick] (7.25,2.01) to[out=-30,in=90] (8,.8);
        \draw[thick, ->] (5.5,2) to[out=90,in=142] (7.35,1.95);

         \draw[thick] (9.5,2.5) to[out=90,in=90] (12,.8);
         \draw[ thick] (10,.8) to[out=90,in=90] (11,.8);

         \draw[-> , thick] (8,.8) to[out=90,in=180] (8.7,1.1);
         \draw[ thick] (8.6,1.1) to[out=0,in=90] (9,.8);

         \draw[thick, ->] (9,.8) to[out=90,in=210]  (9.35,1.105);
        \draw[thick] (9,.8) to[out=90,in=-90] (9.5,1.3);

         \node[blue, font=\tiny] at (1.3,.55) {$1$};
         \node[blue, font=\tiny] at (3.3,.55) {$1$};
         \node[blue, font=\tiny] at (5.3,.55) {$1$};
         \node[blue, font=\tiny] at (2.3,.55) {$1$};
         \node[blue, font=\tiny] at (4.3,.55) {$1$};
         \node[blue, font=\tiny] at (6.3,.55) {$1$};
         \node[blue, font=\tiny] at (7.3,.55) {$1$};
         \node[blue, font=\tiny] at (9.3,.55) {$1$};
         \node[blue, font=\tiny] at (10.3,.55) {$1$};
         \node[blue, font=\tiny] at (11.3,.55) {$1$};
         \node[blue, font=\tiny] at (12.3,.55) {$1$};
         \node[blue, font=\tiny] at (8.3,.55) {$1$};
        \node[blue, font=\tiny] at (7.15,2.45) {$1$};

         \node[red, font=\tiny] at (6.2,4.3) {$2$};
         \node[red, font=\tiny] at (4.05,2.0) {$2$};
          \node[red, font=\tiny] at (8.25,1.3) {$2$};
        \node[red, font=\tiny] at (5.95,1.3) {$2$};

         \node[red, font=\tiny] at (9.9,1.35) {$2$};
         \node[green, font=\tiny] at (9.2,1.35) {$3$};

         \node[blue, font=\tiny] at (5.05,1.6) {$1$};
         \node[blue, font=\tiny] at (9.2,2.1) {$1$};

         \node at (6.5,-1) {$\varphi(w_T)$};

\end{scope}
\end{tikzpicture}}
    \caption{ Example of Definition~\ref{definition: reflection of a web}, showing the reflection of the previous web graph.}
\end{figure}
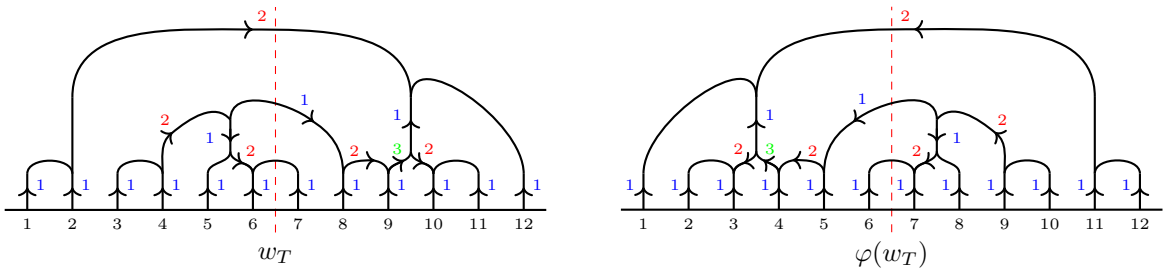

We note that reflection of a web can be viewed equivalently as reflection over the $y$-axis, then shifting the boundary vertices right according to the map $i \rightarrow N+1+1$. This horizontal translation $i \rightarrow N+1+i$ corresponds directly to the last step of evacuation in Definition~\ref{definition: evacuation}, when the integer $N+1$ is added to each entry in the tableau.

\begin{lemma} \label{lemma: reflected web is a web}
    If $w$ is a web graph for $\mathfrak{sl}_n$ then the reflected graph $\varphi(w)$ is also a web graph for $\mathfrak{sl}_n$.  Moreover, if $w_T$ is the web graph obtained from a rectangular standard Young tableau as in Corollary~\ref{corollary: webs from tableau in standard form}, then the boundary edges of $\varphi(w_T)$ are all directed out of the boundary and weighted $1$.
\end{lemma}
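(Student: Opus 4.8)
The plan is to treat the reflection $\varphi$ as a genuine geometric transformation of the plane and verify that it preserves every defining feature of a web graph one axiom at a time. Since reflection across the vertical line $x = \frac{N+1}{2}$ is a homeomorphism (indeed an isometry) of the plane that fixes the $x$-axis setwise and preserves the open upper half-plane, it carries the drawing of $w$ to another planar drawing whose boundary still lies on the $x$-axis. In particular planarity is immediate, and the map sends vertices to vertices and edges to edges while preserving incidences, so it preserves the valence of every vertex: univalent boundary vertices go to univalent boundary vertices and trivalent interior vertices go to trivalent interior vertices.

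For the flow condition, the key observation is that although reflection reverses the orientation of the ambient plane, it does \emph{not} reverse the orientation of any individual arrow: an edge directed from a point $A$ to a point $B$ is sent to the edge directed from $\varphi(A)$ to $\varphi(B)$. Consequently, at each interior vertex $v$ the edges directed into $v$ are carried bijectively to the edges directed into $\varphi(v)$, and likewise for the edges directed out, with all weights $\ell(e)$ unchanged. Hence the alternating sum $\sum_{\textup{in}} \ell(e) - \sum_{\textup{out}} \ell(e)$ is literally the same at $\varphi(v)$ as at $v$, so it remains $\equiv 0 \pmod{n}$; and since the weights are unchanged they still lie in $\{1, \ldots, n-1\}$. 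This establishes all four web axioms for $\varphi(w)$, proving the first claim.

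For the second claim I would invoke Corollary~\ref{corollary: webs from tableau in standard form}, which guarantees that every boundary edge of $w_T$ is directed out of the boundary and weighted $1$. Reflection across $x = \frac{N+1}{2}$ sends the boundary vertex at position $i$ to the boundary vertex at position $N+1-i$, a bijection of $\{1, \ldots, N\}$, so boundary edges are carried to boundary edges. As in the previous paragraph, reflection preserves both the head-to-tail direction of each arrow (so ``directed out of the boundary'' is maintained) and its weight (so weight $1$ is maintained). Therefore every boundary edge of $\varphi(w_T)$ is directed out of the boundary and weighted $1$.

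The argument has no serious obstacle; the one point demanding care is the interaction between the orientation-reversing nature of a reflection and the directed, flow-constrained structure of the web. The resolution is precisely that reflection reverses the ambient orientation of the plane yet preserves the head-to-tail orientation of each individual edge, so the combinatorial in/out data at every vertex---and therefore the mod-$n$ flow condition---is left untouched.
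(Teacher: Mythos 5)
Your proof is correct and follows essentially the same route as the paper: reflection preserves planarity, vertex degrees, the in/out status of each edge at each vertex, and all edge weights, so the mod-$n$ flow condition survives; and the boundary statement follows because reflection carries the boundary edge at position $i$ to the one at $N+1-i$ while keeping its direction and weight. The paper phrases the boundary claim in terms of the (symmetric, all-$1$) sequence of boundary weights being reversed, but this is the same observation.
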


\begin{proof}
Reflecting over a vertical line preserves the property of being a planar directed graph, and also preserves the degree of each vertex as well as whether edges incident to that vertex are directed in or out.  The relative positions of each edge about an interior vertex change after reflection, but the edge weights remain the same so the flow into and out of each interior vertex remains unchanged. Thus, the flow at each interior vertex is $0$ modulo $n$ as required.  Further, if $i$ is a boundary vertex in $w$ then its image in $\varphi(w)$ is $N+1-i$ so the boundary vertices of $\varphi(w)$ are again $1, 2, \ldots, N$. We conclude that reflecting a Fontaine web produces a graph that is also a Fontaine web. 

Finally, if $w_T$ is obtained from a rectangular standard Young tableau as in Corollary~\ref{corollary: webs from tableau in standard form}, then the boundary edges of $w_T$ are all directed out of the boundary and weighted $1$. In general, the sequence of edge weights reading left to right along the boundary in the reflection $\varphi(w)$ is the same as the sequence of edge weights reading right to left along the boundary in $w$. In the case of $w_T$, the sequence of wedge weights along the boundary is symmetric, in particular, all $1$. Thus, the sequence of edge weights along the boundary in $\varphi(w_T)$ is also all $1$ (and all oriented out of the boundary). This completes the proof.
\end{proof}

\section{Evacuation and reflection} \label{SEC: Evacuation and reflection}

Our main claim is that the maps from Figure~\ref{commutative} in the introduction commute.  In other words, the reflection of the multicolored NCM associated to a tableau $T$ is the multicolored NCM associated to the evacuation $E(T)$, and the reflection of the web associated to $T$ is---up to equivalence under specific edge-flips---the web associated to the evacuation $E(T)$.  In this section, we show that each square of the diagram in Figure~\ref{commutative} commutes.  We start with the left square and then proceed to the right square.

\subsection{The left square of the diagram in Figure~\ref{commutative} commutes}

We prove this in two steps, via an intermediate object.  First, we associate a multicolored NCM to the rotation of a tableau, and prove that it has the same arcs as $\mathcal{M}_T$ but with colors flipped $x \leftrightarrow n-x$.  Then, we show that reflecting this intermediate object (while preserving colors) gives the multicolored NCM $\mathcal{M}_{E(T)}$ associated to the evacuation. This is the body of the proof of the left square of our commutative diagram.

We now give a protocol to pair integers $1, 2, \ldots, n$ in a rotated tableau that is essentially the same as the usual rule to create a multicolored NCM, but reading the tableau ``in a mirror and upside-down."

\begin{definition}\label{definition: rotated matching}
    Given a rectangular standard Young tableau $T$ with rotated tableau $\rho(T)$, we denote the rotated multicolored NCM by $\mathcal{M}^{\rho(T)}$ and define it by the following protocol.
    \begin{itemize}
        \item For each row $x = 1, 2, \ldots, n-1$ in $\rho(T)$:
        \begin{enumerate}
            \item Let $j$ be the next number reading \emph{right to left} along row $x$.
            \item Find the \emph{leftmost} number in row $x+1$ that is smaller than $j$ and has not yet been paired with a number on row $x$.  Say this number is $i$.
            \item Pair $i$ and $j$ and create arc $(i,j) \in \mathcal{M}^{\rho(T)}_x$.
        \end{enumerate}   
    \end{itemize}
\end{definition}

We give an example of Definition~\ref{definition: rotated matching} in Figure~\ref{Fig:rotatedT to NCM}.

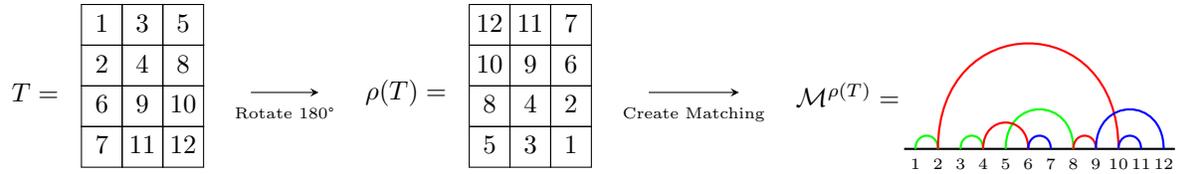
\begin{figure}[h]
    \centering
\begin{tabular}{@{}c@{\hskip 0.5em}c@{\hskip 0.5em}c@{\hskip 0.5em}c@{\hskip 0.5em}c@{\hskip 0.5em}c@{\hskip 0.5em}c@{}}

\begin{tikzpicture}[baseline]
  \node at (0,0) {$T=$};
\end{tikzpicture}

&

\scalebox{1}{%
\begin{ytableau}
1 & 3 & 5\\
2 & 4& 8\\
6 & 9 & 10\\
7 & 11 & 12\\
\end{ytableau}
}

&

\begin{tikzpicture}[baseline]
  \draw[-stealth] (0,0) -- (0.9,0);
  \node[below=2pt] at (0.45,0) {\tiny Rotate 180\textdegree};
\end{tikzpicture}

&

\begin{tikzpicture}[baseline]
  \node at (0,0) {$\rho(T)=$};
\end{tikzpicture}

&

\scalebox{1}{%
\begin{ytableau}
 12 & 11 & 7 \\
10 & 9 & 6 \\
8 & 4 & 2 \\
5 & 3 & 1\\
\end{ytableau}
}

&

\begin{tikzpicture}[baseline]
  \draw[-stealth] (0,0) -- (1.2,0);
  \node[below=2pt] at (0.6,0) {\tiny Create Matching};
\end{tikzpicture}

&

\begin{tikzpicture}[thick, baseline=(current bounding box.center), xscale=0.3, yscale=0.6]


\node at (-2,1.2) {$\mathcal{M}^{\rho(T)}=$};

 \draw[thick] (.5,0) -- (12.5,0);
        \foreach \i in {1,...,12} \node[below, font=\tiny] (\i) at (\i,0) {\i};
        \draw[green, thick] (1) to[out=90,in=90] (2);
        \draw[green, thick] (3) to[out=90,in=90] (4);
        \draw[green, thick] (5) to[out=90,in=90] (8);
        \draw[red, thick] (2) to[out=90,in=90] (10);
        \draw[red, thick] (4) to[out=90,in=90] (6);
        \draw[red, thick] (8) to[out=90,in=90] (9);
        \draw[blue, thick] (6) to[out=90,in=90] (7);
        \draw[blue, thick] (9) to[out=90,in=90] (12);
        \draw[blue, thick] (10) to[out=90,in=90] (11);

\end{tikzpicture}
\end{tabular}
    \caption{Example of Definition~\ref{definition: rotated matching}. On the left is a tableau $T$, in the middle its rotation $\rho(T)$, and on the right the multicolored NCM $\mathcal{M}^{\rho(T)}$. Label the rows of $\rho(T)$ top to bottom $1,2,3, 4$. Reading right to left along row $1$, the first entry is $7$. The left-most entry in row $1+1=2$ that is less than $7$ is $6$. Thus arc $(6,7)$ is in $\mathcal{M}^{\rho(T)}_1$ (shown in blue).}
    \label{Fig:rotatedT to NCM}
\end{figure}

Comparing $\mathcal{M}^{\rho(T)}$ in Figure~\ref{Fig:rotatedT to NCM} to $\mathcal{M}^T$ in Figure~\ref{Fig: multicored NCM to web}, we observe that the two multicolored NCMs have the same arcs, but arcs colored blue in $\mathcal{M}^T$ are green in $\mathcal{M}^{\rho(T)}$ and vice versa.  This is true in general, as the next lemma demonstrates.

\begin{lemma} \label{lemma: comparing rotated NCM to original NCM}
We claim the following:
\begin{enumerate}
    \item For each row $x$ we have $\mathcal{M}^{\rho(T)}_x  = \mathcal{M}^T_{n-x}$
    \item Each $\mathcal{M}^{\rho(T)}_x $ is a noncrossing matching. 
    \item The set $\mathcal{M}^{\rho(T)} = (\mathcal{M}^{\rho(T)}_1, \ldots, \mathcal{M}^{\rho(T)}_{n-1})$ is a multicolored NCM.
\end{enumerate}
\end{lemma}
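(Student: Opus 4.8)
The plan is to reduce the entire lemma to part (1), since parts (2) and (3) follow immediately once the arcs of $\mathcal{M}^{\rho(T)}$ are identified with those of $\mathcal{M}^T$. My strategy for (1) is to show that the protocol of Definition~\ref{definition: rotated matching}, when we undo the $180\degree$ rotation, is \emph{literally the same greedy procedure} as the algorithm of Lemma~\ref{lemma: multicolored NCM from tableau} applied to the appropriate pair of rows of $T$.

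First I would set up the rotation dictionary. Writing $T_{r,c}$ for the entry in row $r$, column $c$ of $T$, the $180\degree$ rotation sends the box $(s,c)$ of $\rho(T)$ to the box $(n+1-s,\,k+1-c)$ of $T$. Consequently row $s$ of $\rho(T)$, read left to right, is row $n+1-s$ of $T$ read right to left. In particular the two rows that Definition~\ref{definition: rotated matching} uses to build $\mathcal{M}^{\rho(T)}_x$---namely rows $x$ and $x+1$ of $\rho(T)$---are the reversals of rows $n+1-x$ and $n-x$ of $T$. Setting $y=n-x$, these are exactly rows $y+1$ and $y$ of $T$, which is precisely the pair of rows the original algorithm uses to build $\mathcal{M}^T_{y}=\mathcal{M}^T_{n-x}$. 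So both matchings live on the same two rows of $T$, and it remains to check that the reading order and the selection rule agree.

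For the reading order: the original algorithm reads row $y+1$ of $T$ left to right, encountering the values $T_{y+1,1}<T_{y+1,2}<\cdots$ in increasing order. The rotated protocol reads row $x$ of $\rho(T)$ \emph{right to left}; since row $x$ of $\rho(T)$ is row $y+1$ of $T$ reversed, right-to-left in $\rho(T)$ is left-to-right in $T$, so it encounters the same values $j$ in the same increasing order. For the selection of the partner $i$: the original rule takes the \emph{rightmost} unpaired value smaller than $j$ in row $y$ of $T$; because this row increases left to right, ``rightmost smaller than $j$'' is the \emph{largest} available value below $j$. The rotated rule takes the \emph{leftmost} unpaired value smaller than $j$ in row $x+1$ of $\rho(T)$; since that row is row $y$ of $T$ reversed (hence decreasing left to right), ``leftmost smaller than $j$'' is again the largest available value below $j$. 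Thus the two selection rules coincide. Since the sequence of $j$'s and the partner-selection rule agree at every step, a straightforward induction on the step number shows that the two procedures maintain the same set of already-paired entries and produce identical arcs; this yields $\mathcal{M}^{\rho(T)}_x=\mathcal{M}^T_{n-x}$, proving (1).

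Finally, (2) and (3) are immediate corollaries. By (1) each $\mathcal{M}^{\rho(T)}_x$ equals the noncrossing matching $\mathcal{M}^T_{n-x}$ from Lemma~\ref{lemma: multicolored NCM from tableau}, so it is noncrossing; and the full collection $\mathcal{M}^{\rho(T)}$ consists of exactly the same arcs as $\mathcal{M}^T$ (only the color labels are permuted by $x\mapsto n-x$), so it covers every boundary point in $\{1,\dots,N\}$ exactly as $\mathcal{M}^T$ does and is therefore a multicolored NCM. The main obstacle is purely bookkeeping: the rotation reverses both the order of the rows and the order within each row, and one must verify that this double reversal interacts correctly with the two different reading conventions (left-to-right versus right-to-left) and the two different extremal selection rules (rightmost versus leftmost) so that they match up. Keeping the dictionary $\rho(T)_{s,c}=T_{n+1-s,\,k+1-c}$ explicit throughout is what makes this transparent.
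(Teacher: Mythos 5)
Your proposal is correct and follows essentially the same route as the paper: both arguments observe that the $180\degree$ rotation reverses rows and reading order simultaneously, so the right-to-left/leftmost protocol on rows $x,x+1$ of $\rho(T)$ is literally the left-to-right/rightmost algorithm on rows $n-x,n+1-x$ of $T$, giving $\mathcal{M}^{\rho(T)}_x=\mathcal{M}^T_{n-x}$ with (2) and (3) as immediate consequences. Your version simply makes the dictionary $\rho(T)_{s,c}=T_{n+1-s,\,k+1-c}$ and the step-by-step matching of the two greedy procedures more explicit than the paper does.
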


\begin{proof}
    Rotating $T$ by 180 degrees exchanges right and left as well as top and bottom.  This means that reading left-to-right in $T$ is the same as reading right-to-left in $\rho(T)$, that row $(n+1)-x$ in $T$ has the same numbers (in reverse order) as row $x$ in $\rho(T)$, and that comparing row $(n+1)-x$ to the row immediately above it in $T$ is the same as comparing row $x$ to the row immediately below it in $\rho(T)$. Thus our protocol for creating $\mathcal{M}^{\rho(T)}_x$ corresponds precisely to the (rotated) rules for creating $\mathcal{M}^T_{n-x}$.  We conclude that for each $x=1, 2, \ldots, n-1$ we have $\mathcal{M}^{\rho(T)}_x = \mathcal{M}^T_{n-x}$.  Parts (2) and (3) follow immediately so the claim holds.
\end{proof}

The next lemma is very similar but compares the rotated tableau $\rho(T)$ to the evacuated tableau $E(T)$. 

\begin{lemma} \label{lemma: comparing rotated matching to evacuated matching}
Let $\mathcal{M}^{E(T)}$ denote the multicolored NCM corresponding to the evacuation $E(T)$. We have
\[(i,j) \in \mathcal{M}^{\rho(T)}_x \hspace{0.5in} \textup{ if and only if } \hspace{0.5in} (N-j+1,N-i+1) \in \mathcal{M}^{E(T)}_x\]
\end{lemma}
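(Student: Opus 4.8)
The plan is to reduce the statement to the entrywise complement via Patrias--Pechenik, and then to recognize both matchings as the unique canonical noncrossing matching attached to a Dyck word. Throughout, write $\sigma(a)=N+1-a$, so that the arc map $\psi\colon(i,j)\mapsto(N-j+1,\,N-i+1)$ appearing in the statement is precisely the reflection of arcs induced by $\sigma$, and $\sigma$ is an order-reversing involution of $\{1,\dots,N\}$.

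First I would set up the reduction. By Lemma~\ref{PP}, the tableau $E(T)$ is obtained from $\rho(T)$ by leaving every box in place and replacing its entry $a$ with $\sigma(a)$; in particular $E(T)$ and $\rho(T)$ have the same shape, and rows $x$ and $x+1$ of $E(T)$ are the $\sigma$-images (in the same positions) of rows $x$ and $x+1$ of $\rho(T)$. Since the construction of a single color class $\mathcal{M}_x$ in both Lemma~\ref{lemma: multicolored NCM from tableau} and Definition~\ref{definition: rotated matching} consults only rows $x$ and $x+1$---the ``not yet paired'' bookkeeping never refers to any other color---it suffices to fix one pair of adjacent rows and prove the claim there.

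Next I would show that $\psi(\mathcal{M}^{\rho(T)}_x)$ and $\mathcal{M}^{E(T)}_x$ are both noncrossing perfect matchings between the values of row $x$ and row $x+1$ of $E(T)$ in which every arc runs from a value in row $x$ to a strictly larger value in row $x+1$. For $\mathcal{M}^{E(T)}_x$ this is immediate from its construction, since each arc $(i,j)$ has $i$ in row $x$, $j$ in row $x+1$, and $i<j$. For $\psi(\mathcal{M}^{\rho(T)}_x)$ I would trace an arc $(i,j)\in\mathcal{M}^{\rho(T)}_x$: here $j$ lies in row $x$ and $i$ in row $x+1$ of $\rho(T)$, so applying $\sigma$ box-by-box sends them to $N+1-j$ in row $x$ and $N+1-i$ in row $x+1$ of $E(T)$ with $N+1-j<N+1-i$, giving a correctly oriented arc. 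That $\psi$ preserves the noncrossing property is because $\sigma$ reverses order (a crossing $i<i'<j<j'$ maps to the reversed crossing of the mirrored points), and that both matchings are perfect on the two rows follows from the rectangular count in Lemma~\ref{lemma: rectangular tableau to multicolored NCM} (with $\mathcal{M}^{\rho(T)}_x$ noncrossing by Lemma~\ref{lemma: comparing rotated NCM to original NCM}).

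The crux is then a uniqueness statement. Reading the $2k$ entries of the two rows left to right along the number line, mark each row-$x$ value by an open bracket and each row-$(x+1)$ value by a close bracket; a noncrossing perfect matching with every arc running from row $x$ to a larger row-$(x+1)$ value is exactly a noncrossing pairing of each open bracket with a later close bracket. Because $E(T)$ is column-strict, in each column the row-$x$ entry is smaller than the row-$(x+1)$ entry, so for every threshold $v$ the row-$(x+1)$ values that are $\le v$ inject (column by column) into the row-$x$ values that are $\le v$; hence every prefix of the bracket word has at least as many open as close brackets, i.e.\ the word is a Dyck word. A Dyck word admits a unique noncrossing bracket matching, so $\psi(\mathcal{M}^{\rho(T)}_x)=\mathcal{M}^{E(T)}_x$, which is the asserted equivalence. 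I expect this last step---recognizing column-strictness as exactly the Dyck (prefix) condition and invoking uniqueness of the bracket matching---to be the main obstacle; the remainder is bookkeeping through $\sigma$.
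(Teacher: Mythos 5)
Your proposal is correct, but it reaches the conclusion by a different mechanism than the paper. The paper also begins from Lemma~\ref{PP} and then argues directly at the level of the greedy algorithms: it asserts that the rule defining $\mathcal{M}^{E(T)}_x$ (read row $x+1$ left to right, pair with the rightmost smaller unpaired entry of row $x$) is equivalent to a reversed rule (read row $x$ right to left, pair with the leftmost larger unpaired entry of row $x+1$), and that under $i \mapsto N+1-i$ this reversed rule becomes verbatim the rule of Definition~\ref{definition: rotated matching} on $\rho(T)$. You instead sidestep any step-by-step comparison of greedy procedures: you observe that both $\psi(\mathcal{M}^{\rho(T)}_x)$ and $\mathcal{M}^{E(T)}_x$ are noncrossing perfect matchings pairing each row-$x$ value of $E(T)$ with a larger row-$(x+1)$ value, translate this into a bracket word, use column-strictness to verify the Dyck prefix condition, and invoke uniqueness of the noncrossing bracket matching. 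Your route is arguably more robust: the paper's pivotal sentence (that the two greedy rules produce the same matching) is exactly the kind of claim your uniqueness argument justifies, and the paper leaves it essentially unproved. The trade-off is that you import one standard fact without proof --- that a Dyck word has a unique noncrossing open-to-later-close matching --- which, while folklore, deserves its one-line induction (an adjacent pair ``()'' must be matched to each other in any noncrossing matching, then delete and recurse). With that supplied, your argument is complete and correctly reduces, color by color, to a single pair of adjacent rows, since the pairing bookkeeping in both constructions is internal to each color class.
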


\begin{proof}
We use Patrias and Pechenik's result from Lemma~\ref{lemma: Patrias Pechenik}: the evacuation $E(T)$ is obtained from the rotation $\rho(T)$ by replacing each entry $i$ in $\rho(T)$ according to the map $i \rightarrow N-i+1$.

The matching $\mathcal{M}^{E(T)}_x$ looks left-to-right along row $x+1$, pairing the rightmost number in row $x$ that is smaller and unpaired.  This is equivalent to reading right-to-left along row $x$, pairing the leftmost number in row $x+1$ that is greater and unpaired. By definition,  the matching $\mathcal{M}^{\rho(T)}_x$  defined on the rotated tableau looks right-to-left along row $x$, pairing the leftmost number in row $x+1$ that is smaller and unpaired. So $(i,j) \in \mathcal{M}^{\rho(T)}_x$ if and only if $(N-j+1, N-i+1) \in \mathcal{M}^{E(T)}_x$ because $i<j$ exactly when $N-j+1 < N-i+1$.  This completes the proof.
\end{proof}

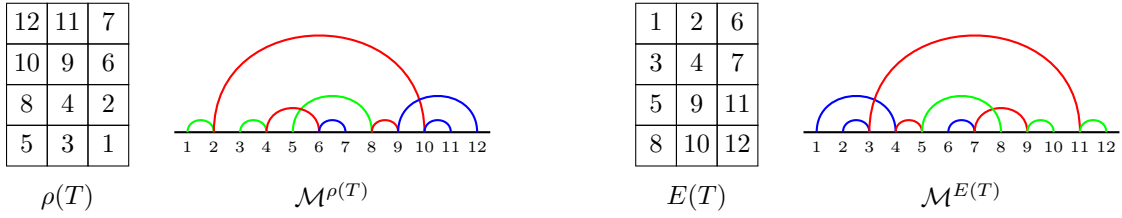
\begin{figure}[h]
\begin{center}
\begin{tabular}{cccc}
\raisebox{0.6in}{\begin{tikzpicture}[baseline=(current bounding box.north)]
    \begin{scope}[scale=1.2]
        \node (phi) {
         \begin{ytableau}
 12 & 11 & 7 \\
10 & 9 & 6 \\
8 & 4 & 2 \\
5 & 3 & 1\\
\end{ytableau}
        };
    \end{scope}
    \node at (0,-1.5) {$\rho(T)$};
\end{tikzpicture}
}

& 
\begin{tikzpicture}[baseline=(current bounding box.center)]
    \begin{scope}[xscale=0.35, yscale=0.55]
        \node at (6.5,-1.5) {$\mathcal{M}^{\rho(T)}$};
        \draw[thick] (.5,0) -- (12.5,0);
        \foreach \i in {1,...,12} \node[below, font=\tiny] (\i) at (\i,0) {\i};
        \draw[green, thick] (1) to[out=90,in=90] (2);
        \draw[green, thick] (3) to[out=90,in=90] (4);
        \draw[green, thick] (5) to[out=90,in=90] (8);
        \draw[red, thick] (2) to[out=90,in=90] (10);
        \draw[red, thick] (4) to[out=90,in=90] (6);
        \draw[red, thick] (8) to[out=90,in=90] (9);
        \draw[blue, thick] (6) to[out=90,in=90] (7);
        \draw[blue, thick] (9) to[out=90,in=90] (12);
        \draw[blue, thick] (10) to[out=90,in=90] (11);
    \end{scope}
\end{tikzpicture} \hspace{0.5in}
&
\raisebox{0.6in}{\begin{tikzpicture}[baseline=(current bounding box.north)]
    \begin{scope}[scale=1.2]
        \node (E) {
\begin{ytableau}
 1 & 2 & 6 \\
3 & 4 & 7 \\
5 & 9 & 11 \\
8 & 10 & 12\\
\end{ytableau}
        };
    \end{scope}
    \node at (0,-1.5) {$E(T)$};
\end{tikzpicture} }
&
\begin{tikzpicture}[baseline=(current bounding box.center)]
    \begin{scope}[xscale=0.35, yscale=0.55]
        \node at (6.5,-1.5) {$\mathcal{M}^{E(T)}$};
        \draw[thick] (.5,0) -- (12.5,0);
        \foreach \i in {1,...,12} \node[below, font=\tiny] (\i) at (\i,0) {\i};
        \draw[blue, thick] (1) to[out=90,in=90] (4);
        \draw[blue, thick] (2) to[out=90,in=90] (3);
        \draw[blue, thick] (6) to[out=90,in=90] (7);
        \draw[red, thick] (3) to[out=90,in=90] (11);
        \draw[red, thick] (4) to[out=90,in=90] (5);
        \draw[red, thick] (7) to[out=90,in=90] (9);
        \draw[green, thick] (5) to[out=90,in=90] (8);
        \draw[green, thick] (9) to[out=90,in=90] (10);
        \draw[green, thick] (11) to[out=90,in=90] (12);
    \end{scope}
\end{tikzpicture}
\\[1.2em]
\end{tabular}
\end{center}
\caption{Example of Lemma~\ref{lemma: comparing rotated matching to evacuated matching} using the tableaux from Figure~\ref{fig:evac-flip}.} \label{fig: compare rotated to evacuated tab and matching}
\end{figure}

The next corollary combines Lemma~\ref{lemma: comparing rotated NCM to original NCM} and Lemma~\ref{lemma: comparing rotated matching to evacuated matching}.  It also proves one half of our main result.

\begin{corollary}
For each arc, we have
\[ (i,j) \in   \mathcal{M}^T_x \hspace{0.5in} \textup{ if and only if } \hspace{0.5in} (N-j+1,N-i+1) \in \mathcal{M}^{E(T)}_{n-x}\]
In other words, the left square of the diagram in Figure~\ref{commutative} commutes.
\end{corollary}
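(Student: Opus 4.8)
The corollary combines the two preceding lemmas by a direct chain of biconditionals, so the plan is to compose them and track how the color index transforms. The statement to prove is that $(i,j) \in \mathcal{M}^T_x$ if and only if $(N-j+1, N-i+1) \in \mathcal{M}^{E(T)}_{n-x}$.

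The approach is to go through the intermediate object $\mathcal{M}^{\rho(T)}$ that appears in both lemmas. First I would invoke Lemma~\ref{lemma: comparing rotated NCM to original NCM}, which gives the arc-level statement that $\mathcal{M}^{\rho(T)}_x = \mathcal{M}^T_{n-x}$ for every row $x$. Reading this with the substitution $x \mapsto n-x$, it says equivalently that $(i,j) \in \mathcal{M}^T_x$ if and only if $(i,j) \in \mathcal{M}^{\rho(T)}_{n-x}$. Next I would apply Lemma~\ref{lemma: comparing rotated matching to evacuated matching}, which states that $(i,j) \in \mathcal{M}^{\rho(T)}_y$ if and only if $(N-j+1, N-i+1) \in \mathcal{M}^{E(T)}_y$ for any color $y$. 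Taking $y = n-x$ here and chaining the two biconditionals immediately yields the claim: $(i,j)\in\mathcal{M}^T_x \iff (i,j)\in\mathcal{M}^{\rho(T)}_{n-x} \iff (N-j+1,N-i+1)\in\mathcal{M}^{E(T)}_{n-x}$.

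The main thing to be careful about is the bookkeeping of the color index, since Lemma~\ref{lemma: comparing rotated NCM to original NCM} flips $x \leftrightarrow n-x$ while Lemma~\ref{lemma: comparing rotated matching to evacuated matching} preserves the color. Substituting $n-x$ for the free color variable in the second lemma is the only nontrivial step, and it is what makes the composite land in $\mathcal{M}^{E(T)}_{n-x}$ rather than $\mathcal{M}^{E(T)}_x$. There is no genuine obstacle here—the real content lives in the two lemmas already proven, and this corollary is purely a matter of composing them and matching indices—but I would write out the two-step equivalence explicitly so the index substitution is transparent. Finally, I would observe that this arc-level biconditional is exactly the assertion that the composite along the top-then-right path agrees with the composite along the left-then-top path in Figure~\ref{commutative} (reflection $\varphi$ of a multicolored NCM both reflects each arc via $(i,j)\mapsto(N+1-j,N+1-i)$ and recolors $x \mapsto n-x$, per Definition~\ref{definition: reflecting multicolored NCM}), which is precisely the statement that the left square commutes.
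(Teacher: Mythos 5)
Your proposal is correct and matches the paper's intended argument exactly: the paper presents this corollary without a written proof, stating only that it ``combines'' Lemma~\ref{lemma: comparing rotated NCM to original NCM} and Lemma~\ref{lemma: comparing rotated matching to evacuated matching}, which is precisely the two-step composition through $\mathcal{M}^{\rho(T)}$ that you carry out. Your index bookkeeping ($\mathcal{M}^{\rho(T)}_{n-x}=\mathcal{M}^T_x$, then applying the second lemma with color $n-x$) is the right and only subtlety, and you handle it correctly.
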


\subsection{The right square of the diagram in Figure~\ref{commutative} commutes}

In principal, we need to show that creating a web graph from $\mathcal{M}$ and then reflecting the web gives the same output as reflecting $\mathcal{M}$ and then creating its web graph. In practice, after we create a web, we can imagine retaining the multicolored NCM superimposed on the graph and then reflecting both simultaneously. This is the key idea in our proof that reflection commutes with the process of building a web graph from a multicolored NCM. It is also an application of the notion of stranding introduced in \cite{RT2025}.

\begin{lemma} \label{lemma: right square commutes}
    Suppose $\mathcal{M}$ is a standard rectangular multicolored NCM as in Definition~\ref{def: standard rectangular matching}. If $w_{\mathcal{M}}$ is its web graph, then the reflected web graph satisfies 
    \[\varphi(w_{\mathcal{M}}) = w_{\varphi(\mathcal{M})} \hspace{0.3in} \textup{as undirected, unweighted planar graphs}. \] 
    Moreover, let $\sim$ denote the equivalence of web graphs under the edge-flips described in Corollary~\ref{corollary: webs from tableau in standard form}.  Then
    \[\varphi(w_{\mathcal{M}}) \sim w_{\varphi(\mathcal{M})}\]    
    In particular, suppose $T$ is a standard Young tableau with evacuation $E(T)$ and corresponding web graphs $w_T$ and $w_{E(T)}$. Let $\mathcal{E}_T$ denote the collection of internal edges in $w_T$ constructed in Step (3) of Lemma~\ref{lemma: web from NCM algorithm} and let $\varphi(w_T)_{\varphi(\mathcal{E}_T)}$ denote the web graph obtained from $\varphi(w_T)$ by performing edge-flips on the edges $\varphi(\mathcal{E}_T)$ as in Corollary~\ref{corollary: webs from tableau in standard form}.  Then the reflection satisfies $\varphi(\mathcal{E}_T) = \mathcal{E}_{E(T)}$ and \[\varphi(w_T)_{\varphi(\mathcal{E}_T)} = \varphi\big( (w_T)_{\mathcal{E}_T}\big) = w_{E(T)} \hspace{0.3in} \textup{ as web graphs.} \]
    In particular, the rightmost square of the diagram in Figure~\ref{commutative} commutes.
\end{lemma}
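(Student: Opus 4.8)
The plan is to follow the idea stated just before the lemma: after constructing $w_{\mathcal{M}}$, I retain the arcs of $\mathcal{M}$ superimposed on the graph and reflect the arcs and the web \emph{simultaneously} over the line $x=\frac{N+1}{2}$. Comparing the result edge-by-edge against the web built directly from $\varphi(\mathcal{M})$ then reduces the entire lemma to local bookkeeping at each $Y$- and $I$-replacement. I would organize the argument as: (i) the undirected, unweighted claim; (ii) a direction-and-weight comparison on each of the three kinds of edges; and (iii) assembling the consequences for $T$ and $E(T)$ using the left square.

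For (i), the replacements in Lemma~\ref{lemma: web from NCM algorithm} are purely local, and their \emph{combinatorial} shape depends only on which boundary points carry two incident arcs and on where arcs cross — data that is geometric and carries none of the color information. Since reflection over a vertical line is an isometry, it sends boundary incidences to boundary incidences and crossings to crossings, and the recoloring $x\mapsto n-x$ built into $\varphi(\mathcal{M})$ (Definition~\ref{definition: reflecting multicolored NCM}) changes neither. I would verify the one point that needs checking, namely that the top/bottom assignment of a reflected dumbbell agrees with the assignment the algorithm produces when rebuilt from the reflected crossing: if $A=(i,j)$ and $B=(i',j')$ cross with $i<i'<j<j'$, then $\varphi(A),\varphi(B)$ cross with the outer-left arc now being $\varphi(B)$, and a direct check shows the vertex $v$ meets the same pair of reflected segments in both pictures. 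This yields $\varphi(w_{\mathcal{M}})=w_{\varphi(\mathcal{M})}$ as undirected, unweighted graphs.

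The heart of the proof is (ii), and it is where the main subtlety lies: reflection is orientation-\emph{reversing}, so it exchanges the clockwise and counterclockwise trajectories of every arc, while the recoloring $x\mapsto n-x$ \emph{reverses which of two colors is larger}. I would track how these two reversals interact on each edge type. On an interior single-arc edge lying on $A\in\mathcal{M}_x$, the direction (clockwise along $A$, weight $x$ by Step (3)) reflects to counterclockwise along $\varphi(A)$ with weight $x$, whereas rebuilding from $\varphi(A)\in\varphi(\mathcal{M})_{n-x}$ gives clockwise with weight $n-x$; the two differ by exactly one \textbf{edge-flip} (reverse direction, $\ell\mapsto n-\ell$). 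On a central two-arc edge from a crossing of $A\in\mathcal{M}_x$ and $B\in\mathcal{M}_{x'}$ with $x<x'$, the weight is $x'-x=(n-x)-(n-x')$, hence unchanged; and since clockwise-$A$ and clockwise-$B$ traverse the central edge oppositely (Step (4)), reflection makes counterclockwise-$\varphi(A)$ and counterclockwise-$\varphi(B)$ opposite, so clockwise-$\varphi(A)$ equals counterclockwise-$\varphi(B)$ — meaning the direction dictated by the now-larger color $n-x$ of $\varphi(A)$ agrees with the reflected direction. Thus the two reversals cancel and these edges are \emph{identical} in $\varphi(w_{\mathcal{M}})$ and $w_{\varphi(\mathcal{M})}$. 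Finally, boundary edges are forced to weight $1$ pointing out of the boundary in both webs by Corollary~\ref{corollary: webs from tableau in standard form} and Lemma~\ref{lemma: reflected web is a web}, so they agree as well.

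Combining these, $\varphi(w_{\mathcal{M}})$ and $w_{\varphi(\mathcal{M})}$ agree everywhere except on the interior single-arc edges, where they differ by a single edge-flip; this proves both the undirected, unweighted equality and $\varphi(w_{\mathcal{M}})\sim w_{\varphi(\mathcal{M})}$. For the statements about $T$, the interior single-arc edges are precisely $\mathcal{E}$; since reflection carries interior single-arc edges of $w_T$ to those of $\varphi(w_T)$, which by (i) and the left-square identity $\varphi(\mathcal{M}^T)=\mathcal{M}^{E(T)}$ are the interior single-arc edges of $w_{E(T)}$, I obtain $\varphi(\mathcal{E}_T)=\mathcal{E}_{E(T)}$. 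Flipping exactly those edges turns $\varphi(w_T)$ into $w_{\varphi(\mathcal{M}^T)}=w_{E(T)}$, giving $\varphi(w_T)_{\varphi(\mathcal{E}_T)}=w_{E(T)}$; and because flipping a set of edges commutes with reflection (reflection preserves weights and maps edges bijectively), $\varphi\big((w_T)_{\mathcal{E}_T}\big)=\varphi(w_T)_{\varphi(\mathcal{E}_T)}$. This is exactly the commutativity of the rightmost square. I expect step (ii) — making the orientation and weight changes cancel correctly on the two-arc edges — to be the main obstacle, with the reflected-dumbbell check in (i) a close second.
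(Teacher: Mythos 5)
Your proposal is correct and follows essentially the same route as the paper: superimposing the arcs of $\mathcal{M}$ on $w_{\mathcal{M}}$, reflecting both simultaneously, and then doing the same three-case comparison (boundary edges, single-arc interior edges, two-arc interior edges) in which the orientation reversal and the recoloring $x \mapsto n-x$ cancel on two-arc edges but produce exactly one edge-flip on each single-arc interior edge. Your treatment is, if anything, slightly more explicit than the paper's about the reflected-dumbbell check and about why edge-flipping commutes with reflection, but the argument is the same.
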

\begin{proof}
    For the following argument, when we create the web graph $w_{\mathcal{M}}$,  we will retain the colored arcs of $\mathcal{M}$ superimposed onto the edges of the web $w_\mathcal{M}$. Definitions~\ref{definition: reflecting multicolored NCM} and~\ref{definition: reflection of a web} are compatible, in the sense \emph{both} that the reflection $\varphi(\mathcal{M})$ will also consist of arcs superimposed on the reflected web graph $\varphi(w_\mathcal{M})$ \emph{and} that resolving $\varphi(\mathcal{M})$ into a web graph via the algorithm in Lemma~\ref{lemma: web from NCM algorithm} produces a graph $w_{\varphi(\mathcal{M})}$ whose underlying undirected and unweighted graph agrees with $\varphi(w_{\mathcal{M}})$.  This proves the first claim.
    
    We now compare directions and weights of edges in $\varphi(w_{\mathcal{M}})$ versus $w_{\varphi(\mathcal{M})}$. By Definition~\ref{definition: reflection of a web}, the weight of an edge $e$ in $w_{\mathcal{M}}$ is the same as that of its reflection $\varphi(e)$ in $\varphi(w_{\mathcal{M}})$ though reflection may change the direction of the edge.  Comparing with Definition~\ref{definition: reflecting multicolored NCM}, we see that each arc stays bound to the edge it was on, so each reflected edge has the same number of arcs superimposed.  However, for each edge $e$ in $w_{\mathcal{M}}$ and each arc $(i,j)$ of color $x$ that is superimposed on $e$, the reflected edge $\varphi(e)$ has arc $(N+1-j, N+1-i)$ colored $n-x$  superimposed in the reflected graph $\varphi(w_{\mathcal{M}})$.  In general, the color $n-x$ and clockwise direction of $(N+1-j, N+1-i)$ determines the weight and direction of $\varphi(e)$ in $w_{\varphi(\mathcal{M})}$. 
    
    There are three types of edges in the web graph $w_{\varphi(\mathcal{M})}$. We analyze them separately.
    \begin{enumerate}
        \item \emph{Boundary edges:} All boundary edges in $w_{\mathcal{M}}$ and in $w_{\varphi(\mathcal{M})}$ are weighted $1$ and directed out of the boundary. Reflection over a vertical line preserves weight and direction of these edges.  So the boundary edges of $\varphi(w_{\mathcal{M}})$ are all weighted one and directed out of the boundary, too.
        \item \emph{Non-boundary edges created from a single arc:} We saw above that each reflected edge $\varphi(e)$ in $\varphi(w_{\mathcal{M}})$ has the same number of arcs superimposed as $e$ did in $w_{\mathcal{M}}$.  Moreover, we saw that if $e$ in $w_{\mathcal{M}}$ has arc $(i,j)$ colored $x$ then $\varphi(e)$ has arc $(N+1-j, N+1-i)$ colored $n-x$.  The clockwise direction of $(N+1-j, N+1-i)$ is opposite that of $(i,j)$ so edge $\varphi(e)$ in  $w_{\varphi(\mathcal{M})}$ has opposite direction to $\varphi(e)$ in $\varphi(w_{\mathcal{M}})$ and also is weighted $n-x$ instead of $x$.  In other words, edge $\varphi(e)$ is related via the equivalence of edge-flip in $w_{\varphi(\mathcal{M})}$ versus $\varphi(w_{\mathcal{M}})$.
        \item \emph{Non-boundary edges created from a pair of arcs:} Any edge $e$ in $w_\mathcal{M}$ created from two different colored arcs, say colors $x<x'$, is sent to an edge $\varphi(e)$ in $\varphi(w_\mathcal{M})$ also with two colors superimposed but of colors $n-x > n-x'$. Again by Lemma~\ref{lemma: web from NCM algorithm}, the edge $e$ in $w_\mathcal{M}$ is directed with the clockwise direction of the arc colored $x'$ and weighted $x'-x$ while $\varphi(e)$ in $w_{\varphi(\mathcal{M})}$ is directed with the clockwise direction of the arc colored $n-x$ and weighted $(n-x)-(n-x')=x'-x$. After reflecting, the edge $\varphi(e)$ in $\varphi(w_\mathcal{M})$ follows the counterclockwise direction of the arc colored $n-x'$ but maintains the original weight $x'-x$.  The arcs colored $n-x'$ and $n-x$ run along edge $\varphi(e)$ in opposite directions, so the counterclockwise direction of the arc colored $n-x'$ is the clockwise direction of the arc $n-x$.  Thus the edge $\varphi(e)$ in $\varphi(w_\mathcal{M})$ agrees in both weight and direction  with the edge $\varphi(e)$ in $w_{\varphi(\mathcal{M})}$.  
    \end{enumerate}
    In other words, the web graph $w_{\varphi(\mathcal{M})}$ agrees with that of the reflected web graph $\varphi(w_{\mathcal{M}})$ up to the equivalence relation of edge-flips.  Specifically, let $\mathcal{E_M}$ be the collection of internal edges in $w_{\mathcal{M}}$ created from a single arc (as in Step (3) of Lemma~\ref{lemma: web from NCM algorithm}).  Then reflection sends edges $\mathcal{E_M}$ in $w_{\mathcal{M}}$ to $\mathcal{E}_{\varphi(\mathcal{M})}$ in both $w_{\varphi(\mathcal{M})}$ and $\varphi(w_{\mathcal{M}})$.  This proves the third claim. Specializing to matchings obtained from tableaux shows that the right square of the diagram in Figure~\ref{commutative} commutes, as desired.
\end{proof}

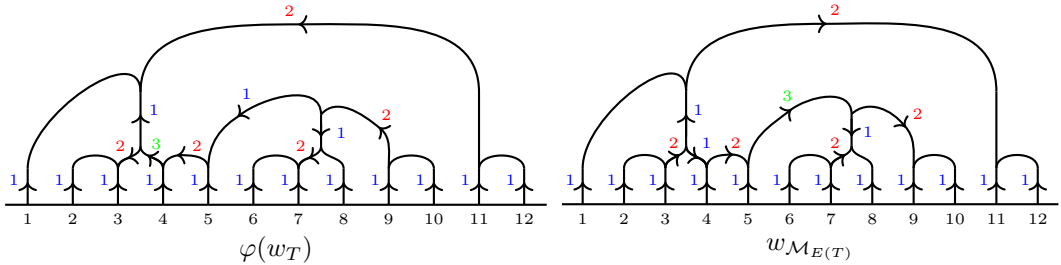
\begin{figure}[h]
\begin{center}
\raisebox{-.1em}{\begin{tikzpicture}[xscale=-1, scale=0.6]
\begin{scope}[xshift=16cm]
        \draw[thick] (.5,0) -- (12.5,0);
         \foreach \i in {1,...,12}
            \node[below, font=\tiny] (\i) at (13-\i,0) {\i};
        \foreach \i in {1,...,12}
            \draw[thick] (\i,.4) -- (\i,.8);
        \foreach \i in {1,...,12}
        \draw[thick,->] (\i,0) -- (\i,.5);

        \draw[thick] (2,.8) -- (2,2.5);

        \draw[thick] (1,.8) to[out=90,in=90] (2,.8);
        \draw[thick, ->] (2,2.5) to[out=90,in=180] (6,4);
        \draw[thick] (5.9,4) to[out=0,in=90] (9.5,2.5);
        
        \draw[thick, ->] (9.5,1.3) -- (9.5,2);
        \draw[thick] (9.5,1.9) -- (9.5,2.5);

        \draw[thick, -<] (10,.8) to[out=90,in=340] (9.6,1.095);
        \draw[thick] (10,.8) to[out=90,in=-90] (9.5,1.3);
        
        \draw[ thick] (3,.8) to[out=90,in=90] (4,.8);
        \draw[ thick] (6,.8) to[out=90,in=90] (7,.8);

        \draw[thick, <-] (5.5,1.5) -- (5.5,1.8);
        \draw[thick] (5.5,1.3) -- (5.5,2);

        \draw[ thick] (4.2, 1.7) to[out=40,in=130] (5.5,2);
        
        \draw[ thick, ->] (4, .8) to[out=90,in=220] (4.22,1.72);
        
        \draw[thick] (5.5,1.3) to[out=-90,in=90] (6,.8);
        \draw[thick , -<] (6,.8) to[out=90,in=340] (5.6,1.095);

        \draw[thick] (5,.8) to[out=90,in=-90] (5.5,1.3);
        \draw[thick] (7.25,2.01) to[out=-30,in=90] (8,.8);
        \draw[thick, ->] (5.5,2) to[out=90,in=142] (7.35,1.95);

         \draw[thick] (9.5,2.5) to[out=90,in=90] (12,.8);
         \draw[ thick] (10,.8) to[out=90,in=90] (11,.8);

         \draw[-> , thick] (8,.8) to[out=90,in=180] (8.7,1.1);
         \draw[ thick] (8.6,1.1) to[out=0,in=90] (9,.8);

         \draw[thick, ->] (9,.8) to[out=90,in=210]  (9.35,1.105);
        \draw[thick] (9,.8) to[out=90,in=-90] (9.5,1.3);

         \node[blue, font=\tiny] at (1.3,.55) {$1$};
         \node[blue, font=\tiny] at (3.3,.55) {$1$};
         \node[blue, font=\tiny] at (5.3,.55) {$1$};
         \node[blue, font=\tiny] at (2.3,.55) {$1$};
         \node[blue, font=\tiny] at (4.3,.55) {$1$};
         \node[blue, font=\tiny] at (6.3,.55) {$1$};
         \node[blue, font=\tiny] at (7.3,.55) {$1$};
         \node[blue, font=\tiny] at (9.3,.55) {$1$};
         \node[blue, font=\tiny] at (10.3,.55) {$1$};
         \node[blue, font=\tiny] at (11.3,.55) {$1$};
         \node[blue, font=\tiny] at (12.3,.55) {$1$};
         \node[blue, font=\tiny] at (8.3,.55) {$1$};
        \node[blue, font=\tiny] at (7.15,2.45) {$1$};

         \node[red, font=\tiny] at (6.2,4.3) {$2$};
         \node[red, font=\tiny] at (4.05,2.0) {$2$};
         \node[red, font=\tiny] at (8.25,1.3) {$2$};
         \node[red, font=\tiny] at (5.95,1.3) {$2$};

         \node[red, font=\tiny] at (9.95,1.3) {$2$};
         \node[green, font=\tiny] at (9.15,1.35) {$3$};

         \node[blue, font=\tiny] at (5.05,1.6) {$1$};
         \node[blue, font=\tiny] at (9.2,2.1) {$1$};

         \node at (6.5,-1) {$\varphi(w_T)$};

\end{scope}
\end{tikzpicture}}
\begin{tikzpicture}[baseline=(current bounding box.south)]
 
\begin{scope}[xscale=0.55, yscale=0.6]
        \draw[thick] (.5,0) -- (12.5,0);
        \foreach \i in {1,...,12}
            \node[below, font=\tiny] (\i) at (\i,0) {\i};
        \foreach \i in {1,...,12}
            \draw[thick] (\i,.4) -- (\i,.8);
        \foreach \i in {1,...,12}
        \draw[thick,->] (\i,0) -- (\i,.5);

        \draw[thick] (11,.8) -- (11,2.5);

        \draw[thick] (12,.8) to[out=90,in=90] (11,.8);
        
        \draw[thick] (6.8,4) to[out=0,in=90] (11,2.5);
        \draw[thick, ->] (3.5,2.5) to[out=90,in=180] (6.9,4);
        
        \draw[thick, ->] (3.5,1.3) -- (3.5,2);
        \draw[thick] (3.5,1.5) -- (3.5,2.5);

        \draw[thick, ->] (3,.8) to[out=90,in=210]  (3.35,1.105);
        \draw[thick] (3,.8) to[out=90,in=-90] (3.5,1.3);
        
        \draw[ thick] (10,.8) to[out=90,in=90] (9,.8);
        \draw[ thick] (7,.8) to[out=90,in=90] (6,.8);

        \draw[thick, <-] (7.5,1.5) -- (7.5,1.9);
        \draw[thick] (7.5,1.3) -- (7.5,2);
        
        \draw[thick, ->] (7.5,2) to[out=40,in=130]  (8.81, 1.7);
        
        \draw[ thick] (8.78,1.74) to[out=-45,in=90]  (9, .8);
        
        \draw[thick, ->] (7,.8) to[out=90,in=210]  (7.35,1.105);
        \draw[thick] (7,.8) to[out=90,in=-90] (7.5,1.3);
        
        \draw[thick] (8,.8) to[out=90,in=-90] (7.5,1.3);

        \draw[thick, ->] (5,.8) to[out=90,in=215] (6.05,2.105);
        \draw[thick] (6,2.075) to[out=35,in=90] (7.5,2);

        \draw[thick] (4,.8) to[out=90,in=-90] (3.5,1.3);
        \draw[thick] (3.5,2.5) to[out=90,in=90] (1,.8);
        \draw[thick] (3,.8) to[out=90,in=90] (2,.8);

        \draw[->, thick] (4,.8) to[out=90,in=180] (4.65,1.1);
        \draw[thick] (4.62,1.1) to[out=0,in=90] (5,.8);

        \draw[thick, -<] (4,.8) to[out=90,in=340] (3.6,1.095);
        \draw[thick] (4,.8) to[out=90,in=-90] (3.5,1.3);

         \node[blue, font=\tiny] at (0.7,.55) {$1$};
         \node[red, font=\tiny] at (3.2,1.35) {$2$};
         \node[blue, font=\tiny] at (3.8,2.1) {$1$};
         \node[blue, font=\tiny] at (2.7,.55) {$1$};
         \node[blue, font=\tiny] at (1.7,.55) {$1$};
         \node[blue, font=\tiny] at (3.7,.55) {$1$};
         \node[blue, font=\tiny] at (4.7,.55) {$1$};
        \node[red, font=\tiny] at (4.7,1.35) {$2$};
        \node[blue, font=\tiny] at (4,1.35) {$1$};
        \node[green, font=\tiny] at (5.95,2.4) {$3$};
        \node[blue, font=\tiny] at (7.7,.55) {$1$};
        \node[blue, font=\tiny] at (5.7,.55) {$1$};
        \node[blue, font=\tiny] at (6.7,.55) {$1$};
        \node[blue, font=\tiny] at (8.7,.55) {$1$};
     \node[blue, font=\tiny] at (9.7,.55) {$1$};
     \node[blue, font=\tiny] at (10.7,.55) {$1$};
     \node[blue, font=\tiny] at (11.7,.55) {$1$};
     \node[red, font=\tiny] at (7.1,1.35) {$2$};
      \node[blue, font=\tiny] at (7.9,1.6) {$1$};
      \node[red, font=\tiny] at (9.1,2.0) {$2$};
       \node[red, font=\tiny] at (7.1,4.3) {$2$};
       \node at (6.5,-1) {$w_{\mathcal{M}_{E(T)}}$};         
\end{scope}
\end{tikzpicture}
\end{center}
\caption{Example of Lemma~\ref{lemma: right square commutes} continuing with the previous web graph.} \label{fig: reflected to evacuated web graph}
\end{figure}

Figure~\ref{fig: reflected to evacuated web graph} completes our running example of a $\mathfrak{sl}_4$ web graph, comparing the reflected web graph $\varphi(w_T)$ to the web graph $w_{E(T)}$ for the evacuated tableau.  We end with Figure~\ref{fig: sl3 example}, which gives an example of this for $\mathfrak{sl}_3$, which was analyzed by Patrias and Pechenik  \cite{PatriasPechenik2023}.  This motivates our last corollary, showing that if we use the edge-weight and direction conventions that are customary for $\mathfrak{sl}_3$ and $\mathfrak{sl}_4$ web graphs, then the reflected web graph is precisely the web graph associated to the evacuation.

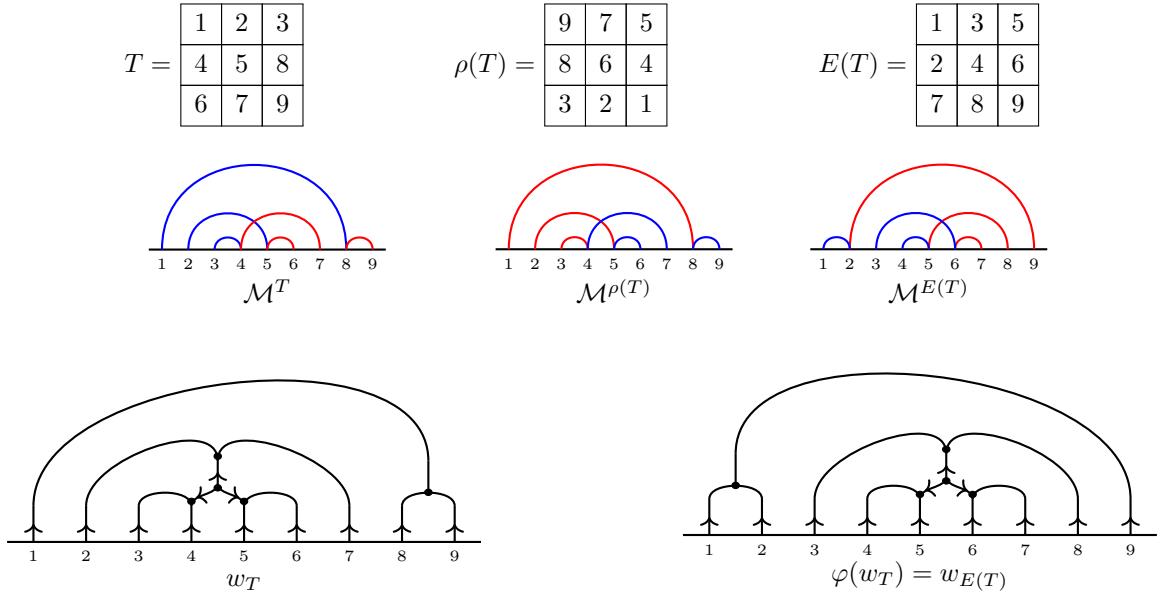
\begin{figure}[h!]
\begin{center}
\begin{tabular}{ccc}

\begin{tikzpicture}[baseline=(current bounding box.north)]
    \begin{scope}[scale=1.2]
        \node (T) {
            $T=$
            \begin{ytableau}
                1 & 2 & 3 \\
                4 & 5 & 8 \\
                6 & 7 & 9 \\
            \end{ytableau}
        };
    \end{scope}
\end{tikzpicture}
\hspace{1.3cm}
&
\begin{tikzpicture}[baseline=(current bounding box.north)]
    \begin{scope}[scale=1.2]
        \node (phi) {
            $\rho(T)=$
            \begin{ytableau}
                9 & 7 & 5 \\
                8 & 6 & 4 \\
                3 & 2 & 1 \\
            \end{ytableau}
        };
    \end{scope}
\end{tikzpicture}
\hspace{1.3cm}

&
\begin{tikzpicture}[baseline=(current bounding box.north)]
    \begin{scope}[scale=1.2]
        \node (E) {
            $E(T)=$
            \begin{ytableau}
                1 & 3 & 5 \\
                2 & 4 & 6 \\
                7 & 8 & 9 \\
            \end{ytableau}
        };
    \end{scope}
\end{tikzpicture}

\\

\begin{tikzpicture}[baseline=(current bounding box.center)]
    \begin{scope}[xscale=0.35, yscale=0.55]
        \node at (5,-1) {$\mathcal{M}^T$};
        \draw[thick] (.5,0) -- (9.5,0);
        \foreach \i in {1,...,9} \node[below, font=\tiny] (\i) at (\i,0) {\i};
        
        \draw[blue, thick] (1) to[out=90,in=90] (8);
        \draw[blue, thick] (2) to[out=90,in=90] (5);
        \draw[blue, thick] (3) to[out=90,in=90] (4);
        \draw[red, thick] (4) to[out=90,in=90] (7);
        \draw[red, thick] (5) to[out=90,in=90] (6);
        \draw[red, thick] (8) to[out=90,in=90] (9);
        
    \end{scope}
\end{tikzpicture}
&
\begin{tikzpicture}[baseline=(current bounding box.center)]
    \begin{scope}[xscale=0.35, yscale=0.55]
        \node at (5,-1) {$\mathcal{M}^{\rho(T)}$};
        \draw[thick] (.5,0) -- (9.5,0);
        \foreach \i in {1,...,9} \node[below, font=\tiny] (\i) at (\i,0) {\i};
        
        \draw[red, thick] (1) to[out=90,in=90] (8);
        \draw[red, thick] (2) to[out=90,in=90] (5);
        \draw[red, thick] (3) to[out=90,in=90] (4);
        \draw[blue, thick] (4) to[out=90,in=90] (7);
        \draw[blue, thick] (5) to[out=90,in=90] (6);
        \draw[blue, thick] (8) to[out=90,in=90] (9);
        
    \end{scope}
\end{tikzpicture}
&
\begin{tikzpicture}[baseline=(current bounding box.center)]
    \begin{scope}[xscale=-0.35, yscale=0.55]
        \node at (5,-1) {$\mathcal{M}^{E(T)}$};
        \draw[thick] (.5,0) -- (9.5,0);
        \foreach \i in {1,...,9} \node[below, font=\tiny] (\i) at (\i,0) {};
        \node[below, font=\tiny] at (9,0) {$1$};
        \node[below, font=\tiny] at (8,0) {$2$};
        \node[below, font=\tiny] at (7,0) {$3$};
        \node[below, font=\tiny] at (6,0) {$4$};
        \node[below, font=\tiny] at (5,0) {$5$};
        \node[below, font=\tiny] at (4,0) {$6$};
        \node[below, font=\tiny] at (3,0) {$7$};
        \node[below, font=\tiny] at (2,0) {$8$};
        \node[below, font=\tiny] at (1,0) {$9$};
        
        \draw[red, thick] (1) to[out=90,in=90] (8);
        \draw[red, thick] (2) to[out=90,in=90] (5);
        \draw[red, thick] (3) to[out=90,in=90] (4);
        \draw[blue, thick] (4) to[out=90,in=90] (7);
        \draw[blue, thick] (5) to[out=90,in=90] (6);
        \draw[blue, thick] (8) to[out=90,in=90] (9);
        
    \end{scope}
\end{tikzpicture}
\end{tabular}
\end{center}
\begin{center}
\begin{tikzpicture}[xscale=0.7, yscale=0.6]
     \draw[thick] (.5,0) -- (9.5,0);
         \foreach \i in {1,...,9}
            \node[below, font=\tiny] (\i) at (\i,0) {\i};

    \node at (5,-.9) {$w_T$};
    
    \filldraw[black] (4,.9) circle (2pt);
    \filldraw[black] (5,.9) circle (2pt);
    \filldraw[black] (4.5,1.2) circle (2pt);
    \filldraw[black] (4.5,1.9) circle (2pt);
    \filldraw[black] (8.5,1.1) circle (2pt);

    \draw[thick] (3,.8) to[out=90,in=90] (4,.8);
    \draw[thick] (5,.8) to[out=90,in=90] (6,.8);
    \draw[thick] (8,.8) to[out=90,in=90] (9,.8);

    \draw[thick] (2,.8) to[out=90,in=90] (4.5,1.8);
    \draw[thick] (7,.8) to[out=90,in=90] (4.5,1.8);

    \draw[thick] (8.5,1.05) -- (8.5,1.8);
    \draw[thick] (1,.8) to[out=90,in=90] (8.5,1.8);

    \begin{scope}[thick,decoration={
    markings,
    mark=at position 0.5 with {\arrow{>}}}
    ] 

    \foreach \i in {1,...,9}
            \draw[thick, postaction={decorate}] (\i,0) -- (\i,.8);
    \draw[thick, postaction={decorate}] (4.5, 1.3) -- (4.5, 1.8);
    \end{scope}

    \begin{scope}[thick,decoration={
    markings,
    mark=at position 0.5 with {\arrow{<}}}
    ] 
    \draw[thick, postaction={decorate}] (5,.8) to[out=90,in=-90] (4.5,1.3);
    \draw[thick, postaction={decorate}] (4,.8) to[out=90,in=-90] (4.5,1.3);

    \end{scope}
\end{tikzpicture}\hspace{1in}
\begin{tikzpicture}[xscale=-0.7, yscale=0.6]
     \draw[thick] (.5,0) -- (9.5,0);
         \foreach \i in {1,...,9}
            \node[below, font=\tiny] (\i) at (10-\i,0) {\i};

    \node at (5,-.9) {$\varphi(w_T) = w_{E(T)}$};

    \filldraw[black] (4,.9) circle (2pt);
    \filldraw[black] (5,.9) circle (2pt);
    \filldraw[black] (4.5,1.2) circle (2pt);
    \filldraw[black] (4.5,1.9) circle (2pt);
    \filldraw[black] (8.5,1.1) circle (2pt);

    \draw[thick] (3,.8) to[out=90,in=90] (4,.8);
    \draw[thick] (5,.8) to[out=90,in=90] (6,.8);
    \draw[thick] (8,.8) to[out=90,in=90] (9,.8);

    \draw[thick] (2,.8) to[out=90,in=90] (4.5,1.8);
    \draw[thick] (7,.8) to[out=90,in=90] (4.5,1.8);

    \draw[thick] (8.5,1.05) -- (8.5,1.8);
    \draw[thick] (1,.8) to[out=90,in=90] (8.5,1.8);

    \begin{scope}[thick,decoration={
    markings,
    mark=at position 0.5 with {\arrow{>}}}
    ] 

    \foreach \i in {1,...,9}
            \draw[thick, postaction={decorate}] (\i,0) -- (\i,.8);
    \draw[thick, postaction={decorate}] (4.5, 1.3) -- (4.5, 1.8);
    \end{scope}

    \begin{scope}[thick,decoration={
    markings,
    mark=at position 0.5 with {\arrow{<}}}
    ] 
    \draw[thick, postaction={decorate}] (5,.8) to[out=90,in=-90] (4.5,1.3);
    \draw[thick, postaction={decorate}] (4,.8) to[out=90,in=-90] (4.5,1.3);

    \end{scope}
\end{tikzpicture}
\end{center}
\caption{Example of main result in $\mathfrak{sl}_3$ using the usual $\mathfrak{sl}_3$ web graph conventions that all interior vertices are sources or sinks and all edges are weighted $1$.  Note that with the usual $\mathfrak{sl}_3$ edge direction and weights, the reflection $\varphi(w_T)$ is exactly the graph $w_{E(T)}$.} \label{fig: sl3 example}
\end{figure}

\begin{corollary} \label{corollary: sl3 and sl4 web conventions}
For $\mathfrak{sl}_3$ and $\mathfrak{sl}_4$ web graphs, it is customary to direct and weight edges so that:
\begin{itemize}
    \item every boundary vertex is a source and the incident edge is weighted $1$,
    \item every interior vertex is incident to three edges weighted $1$ (for $\mathfrak{sl}_3$) or two edges weighted $1$ and one edge weighted $2$ (for $\mathfrak{sl}_4$),  
    \item edges of weight $1$ are directed so every interior vertex is a source or a sink, and
    \item (for $\mathfrak{sl}_4$) edges of weight $2$ are undirected.
\end{itemize}
With these conventions and $T$ a standard Young tableau of shape $n \times k$ for $n=3$ or $n=4$, we have
\[\varphi(w_T) = w_{E(T)}\]
\end{corollary}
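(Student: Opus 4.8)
The plan is to deduce the corollary directly from Lemma~\ref{lemma: right square commutes}, which already tells us that $\varphi(w_T)$ and $w_{E(T)}$ coincide as undirected, unweighted graphs and are related by edge-flips on precisely the set $\mathcal{E}_T$ of internal edges built from a single arc in Step (3) of Lemma~\ref{lemma: web from NCM algorithm}. The only thing left to show is that, once both graphs are drawn using the stated conventions, these edge-flips become invisible, so that the two directed, weighted graphs are literally equal.

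First I would observe that the conventions single out a \emph{unique} representative in each edge-flip equivalence class, edge by edge. Recall that an edge-flip replaces weight $x$ by $n-x$ and reverses orientation. For $\mathfrak{sl}_3$ the available weights are $\{1,2\}$ and the flip swaps them, so requiring every edge to have weight $1$ forces a single choice of weight and hence of direction; for $\mathfrak{sl}_4$ the flip fixes weight $2$ and swaps $1 \leftrightarrow 3$, so declaring weight-$2$ edges undirected and requiring the remaining edges to have weight $1$ again pins down a unique representative. Thus any two webs in the same edge-flip class have the same canonical form, and it suffices to check that $w_{E(T)}$ and $\varphi(w_T)$, after canonicalization, are each honestly of this form.

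Next I would verify that the webs produced by the algorithm admit such a canonicalization by analyzing the two kinds of interior vertex locally. At a $Y$-vertex coming from arcs of colors $x$ and $x+1$ the incident weights are $1, x, x+1$, and at a dumbbell vertex coming from crossing arcs of colors $x<x'$ the incident weights are $\{x, x', x'-x\}$ (with the companion vertex carrying the same multiset). Running through the finitely many color pairs for $n=3,4$ and flipping each weight-$(n-1)$ edge to weight $1$, every interior vertex ends up with three weight-$1$ edges when $n=3$, and with exactly two weight-$1$ edges and one weight-$2$ edge when $n=4$. Moreover the flow-mod-$n$ condition then forces the weight-$1$ edges at each vertex to be all-in or all-out, i.e.\ the vertex is a source or sink for the weight-$1$ edges, which is exactly the remaining requirement of the conventions. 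Hence both $w_{E(T)}$ and $\varphi(w_T)$ do satisfy the conventions after canonicalization.

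Finally, combining these two points, since Lemma~\ref{lemma: right square commutes} places $\varphi(w_T)$ and $w_{E(T)}$ in the same edge-flip equivalence class and the conventions pick out a unique representative of that class, the two graphs agree exactly: $\varphi(w_T) = w_{E(T)}$. Concretely, the single-arc edges of $\mathcal{E}_T$---the only edges where $\varphi(w_T)$ (weight $x$) and $w_{E(T)}$ (weight $n-x$) disagree---are reconciled because both get normalized to the same weight and direction, while the boundary edges and the two-arc edges already agree by Lemma~\ref{lemma: right square commutes}. I expect the main obstacle to be the vertex-local bookkeeping in the third step: one must confirm that the algorithm never produces a weight outside $\{1,2\}$ after canonicalization (so the $\mathfrak{sl}_4$ ``two $1$'s and one $2$'' pattern genuinely holds at every vertex) and that the source/sink orientation is automatic rather than an extra constraint.
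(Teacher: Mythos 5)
Your proposal is correct and follows essentially the same route as the paper: deduce the claim from Lemma~\ref{lemma: right square commutes} together with the observation that the $\mathfrak{sl}_3$/$\mathfrak{sl}_4$ conventions, combined with the fact that reflection preserves weights, the source/sink property, and boundary orientations, pin down a unique representative. If anything, your edge-by-edge analysis of the flip orbits and the vertex-local check of the weight patterns supplies details (in particular, \emph{why} the conventions select a unique representative of the edge-flip class) that the paper's three-sentence proof leaves implicit.
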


\begin{proof}
Reflection preserves the property of being a source or sink and reflection over a vertical line preserves the direction of boundary edges.  Lemma~\ref{lemma: right square commutes} showed that the undirected, unweighted graphs $\varphi(w_T)$ and $w_{E(T)}$ agree.  It follows that with the above conventions, the graphs $\varphi(w_T) = w_{E(T)}$ on the nose, without applying any edge-flips.
\end{proof}

\bibliographystyle{alpha}
\fontsize{10pt}{10pt}\selectfont
\newcommand{\etalchar}[1]{$^{#1}$}

\end{document}